\documentclass[preprint,11pt]{elsarticle}
\usepackage{amsfonts}
\usepackage{}
\usepackage{amsmath,amssymb,amsthm}
\usepackage{graphicx}
\usepackage{color,ulem}
\usepackage{soul} 
\usepackage{color, xcolor} 

\makeatletter
\def\ps@pprintTitle{%
  \let\@oddhead\@empty
  \let\@evenhead\@empty
  \def\@oddfoot{\reset@font\hfil\thepage\hfil}
  \let\@evenfoot\@oddfoot}
\makeatother

\textwidth142mm  \hoffset-9mm \topmargin-10mm \textheight225mm
\setlength\arraycolsep{2pt}

\newtheorem{defin}{Definition}[section]

\newtheorem{theorem}[defin]{Theorem}
\newtheorem{lemma}[defin]{Lemma}
\newtheorem{proposition}[defin]{Proposition}
\newtheorem{corollary}[defin]{Corollary}





\begin{document}

\begin{frontmatter}

\title{ Some characterizations of $\omega$-balanced topological groups with a $q$-point}

\author[mymainaddress]{Deng-Bin Chen}
\ead{374042079@qq.com}

\author[mymainaddress]{Hai-Hua Lin}
\ead{1048363420@qq.com}

\author[mymainaddress]{Li-Hong Xie \corref{cor1} }
\ead{yunli198282@126.com}

\cortext[cor1]{Corresponding author: Li-Hong Xie. Supported by NSF of Guangdong Province (No. 2021A1515010381) and the Innovation Project of Department of Education of Guangdong Province, China (No. 2022KTSCX145).}

\address[mymainaddress]{School of Mathematics and Computational Science, Wuyi University, Jiangmen, Guangdong 529000, P.R. China}

\begin{abstract}

In this paper, we study some characterizations of $q$-spaces, strict $q$-spaces and strong $q$-spaces under $\omega$-balanced topological groups as follows:

\begin{enumerate}
\item [(1)] A topological group $G$ is $\omega$-balanced and a $q$-space if and only if for each open neighborhood $O$ of the identity in $G$, there is a countably compact invariant subgroup $H$ which is of countable character in $G$, such that $H \subseteq O$ and the canonical quotient mapping $p:G\rightarrow G/H$ is quasi-perfect and the quotient group $G/H$ is metrizable.
\item [(2)] A topological group $G$ is $\omega$-balanced and a strict $q$-space if and only if for each open neighborhood $O$ of the identity in $G$, there is a closed sequentially compact invariant subgroup $H$ which is of countable character in $G$, such that $H \subseteq O$ and the canonical quotient mapping $p:G\rightarrow G/H$ is sequential-perfect and the quotient group $G/H$ is metrizable.
\item [(3)] A topological group $G$ is $\omega$-balanced and a strong $q$-space if and only if for each open neighborhood $O$ of the identity in $G$, there is a closed sequentially compact invariant subgroup $H$ of countable character $\{V_{n}:n\in \omega\} $, such that $H \subseteq O$  and $\{V_{n}:n\in\omega\}$ is a strong $q$-sequence at each $ y\in H $, in $G$ such that the canonical quotient mapping $p:G\rightarrow G/H$ is strongly sequential-perfect  and the quotient group $G/H$ is metrizable.
\end{enumerate}

\end{abstract}

\begin{keyword}
Strict $q$-space; strong $q$-space; $q$-space; topological group;
invariant subgroup.

\MSC[2020]Primary 54A20, secondary 54H11, 54E18, 54E35.
\end{keyword}

\end{frontmatter}

\section{Introduction}
 A group $G$ endowed with a topology $\tau$ is called a {\it paratopological group} if multiplication in $G$ is continuous as a mapping of $G\times G$ to $G$, where $G\times G$ carries the usual product topology. A {\it topological group} is a paratopological group with continuous inversion. All topological groups and spaces in this paper are assumed to be Hausdorff. Given a topological group $G$ and its subgroup $H$, the quotient space $G/H$ means that the left coset with the quotient topology.

 Recall that a point $x$ in a space $X$ is called an {\it accumulation point} of a family $\gamma$ of subsets in $X$, if every neighborhood $U$ of $x$ joints with infinitely many elements of $\gamma$. A space $X$ is called {\it sequentially compact} if every sequence of $X$ has a convergent subsequence \cite{En}. Next we give the concepts of $q$-space, strict $q$-space and strong $q$-space.

\begin{enumerate}
\item [$\bullet$] A point $x\in X$ is called a {\it $q$-point} \cite[p.389]{AT} of a space $X$ if there exists a sequence $\{U_n:n\in\omega\}$ of open neighborhoods of $x$ in $X$ such that every sequence $\{x_n\}_{n\in\omega}$ of points in $X$ such that $x_n\in U_n$ for each $n\in\omega$ has a point of accumulation in $X$. We also call $\{U_n:n\in\omega\}$ a {\it $q$-sequence} at $x$. A space $X$ is said to be a {\it$q$-space} if every point of it is a $q$-point.

\item [$\bullet$]A point $x\in X$ is called a {\it strict $q$-point} of a space $X$ if there exists a $q$-sequence $\{U_n:n\in\omega\}$ at $x$ such that $\bigcap_{n\in\omega} U_n$ is a sequentially compact set. We also call $\{U_n:n\in\omega\}$ a {\it strict $q$-sequence} at $x$. A space $X$ is said to be a {\it strict $q$-space} if every point in $X$ is a strict $q$-point.

\item [$\bullet$]  A point $x\in X$ is called a {\it strong $q$-point} of a space $X$ if there exists a sequence $\{U_n:n\in\omega\}$ of open neighborhoods
of $x$ in $X$ such that every sequence $\{x_n\}_{n\in\omega}$ of points in $X$ such that $x_n\in U_n$ for each $n\in\omega$ has a convergent subsequence.
We also call $\{U_n:n\in\omega\}$ a strong {\it $q$-sequence} at $x$.
A space $X$ is said to be a {\it strong $q$-space} if every point in $X$ is a strong $q$-point \cite{PL}.

\end{enumerate}

 Recall that a continuous mapping $f:X\rightarrow Y$ is {\it perfect} if $f$ is a closed onto mapping and all fibers $f^{-1}(y)$ are compact subsets of $X$\cite[p.182]{En}.
 Recall that a continuous mapping $f:X\rightarrow Y$ is {\it quasi-perfect} \cite[Definition 2.1.3]{LY} if $f$ is closed such that $f^{-1}(y)$ is countably compact for each $y\in Y$.

 Peng and Liu gave a characterization of $q$-space.

\begin{theorem}\cite[Theorem 7]{PL}\label{Th1}
A topological group $G$ is a $q$-space if and only if there is a closed countably compact  subgroup $H$ of $G$ such that the quotient group $G/H$ is metrizable and the canonical quotient mapping $p:G\rightarrow G/H$ is quasi-perfect.
\end{theorem}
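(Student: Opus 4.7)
I would handle the two implications separately.

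For the sufficiency direction, assume such an $H$ exists. By homogeneity of $G$ it suffices to verify that the neutral element $e$ is a $q$-point. Since $G/H$ is metrizable, I would pick a decreasing countable base $\{W_n:n\in\omega\}$ of neighborhoods of $p(e)$ in $G/H$ and set $U_n=p^{-1}(W_n)$. Given any sequence $\{x_n\}$ with $x_n\in U_n$, the images satisfy $p(x_n)\to p(e)$. I would then invoke a short preliminary lemma: if $f:X\to Y$ is closed, $f^{-1}(y_0)$ is countably compact, and $f(x_n)\to y_0$, then $\{x_n\}$ has an accumulation point in $X$. The proof of that lemma is brief: if not, $\{x_n\}$ would be closed discrete in $X$, hence $f(\{x_n\})$ closed in $Y$, and a short iterative argument forces infinitely many $x_n$ into the fiber $f^{-1}(y_0)$, contradicting its countable compactness. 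Applying the lemma to $p$ shows that $\{U_n\}$ is a $q$-sequence at $e$.

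For the necessity direction, let $\{O_n:n\in\omega\}$ be a $q$-sequence at $e$. The plan is to construct recursively a decreasing family $\{V_n\}$ of symmetric open neighborhoods of $e$ subject to three constraints: (i) $V_n\subseteq O_n$; (ii) $V_{n+1}V_{n+1}V_{n+1}\subseteq V_n$, so that $H:=\bigcap_n V_n$ is a closed subgroup of $G$; and (iii) conjugation absorption $gV_{n+1}g^{-1}\subseteq V_n$ for $g$ ranging over a carefully chosen countable subset of $G$, which will make $H$ invariant. From (i) and the $q$-sequence property, every sequence in $H$ accumulates in $G$, and closedness of $H$ keeps those accumulation points inside, so $H$ is countably compact. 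From (ii) a Birkhoff--Kakutani style construction yields a continuous left-invariant pseudometric on $G$ whose zero set is $H$, and the descended metric on $G/H$ is compatible with the quotient topology, giving metrizability. Quasi-perfectness of $p$ then falls out: the fibers are the cosets $gH$, each homeomorphic to the countably compact $H$, and closedness of $p$ follows because $H$ has countable character.

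The step I expect to be the main obstacle is arranging condition (iii). Without an a priori $\omega$-balancedness assumption one cannot freely refine $V_{n+1}$ so that every inner automorphism of $G$ moves it into $V_n$. The natural route is to extract from the $q$-point structure at $e$ enough conjugation control --- informally, that conjugates $gO_ng^{-1}$ retain the $q$-sequence role for sufficiently many $g$ --- and to interleave this with the symmetry and cubing conditions inside the recursion. Once this delicate bookkeeping is done, assembling $H$, its countable character, the metrizability of $G/H$, and the quasi-perfectness of $p$ should be essentially routine.
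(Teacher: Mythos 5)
The paper does not actually prove Theorem~\ref{Th1}; it is imported from Peng and Liu \cite[Theorem 7]{PL}, so the only in-paper point of comparison is the sufficiency half of Theorem~\ref{Th6}, which runs the same construction you attempt but under the extra hypothesis of $\omega$-balancedness. Your sufficiency direction is correct: pulling back a decreasing countable base at $p(e)$ and invoking the standard fact that a closed map with countably compact fibres lifts cluster points (if $\{x_n\}$ had no accumulation point, only finitely many terms could lie in $p^{-1}(p(e))$ by countable compactness of the fibre, and the remaining terms would form a closed set whose closed image misses $p(e)$ yet clusters at it) is exactly the expected argument.

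The genuine gap is in your necessity direction, and it is self-inflicted. Your condition (iii) is designed to make $H=\bigcap_n V_n$ an \emph{invariant} subgroup, and you correctly identify that without $\omega$-balancedness one cannot arrange $gV_{n+1}g^{-1}\subseteq V_n$ for all $g\in G$ --- but you leave that ``main obstacle'' unresolved, and it cannot be resolved: a topological group with a $q$-point need not be $\omega$-balanced, and the entire content of Theorem~\ref{Th6} in this paper is that $\omega$-balancedness is precisely what buys invariance of $H$. Theorem~\ref{Th1} does not require $H$ to be invariant; despite the phrase ``quotient group,'' the paper's stated convention is that $G/H$ is the left coset space for an arbitrary subgroup $H$. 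So the repair is to delete (iii) entirely: take symmetric open $V_n\subseteq O_n$ with $V_{n+1}V_{n+1}\subseteq V_n$, so that $H=\bigcap_n V_n$ is a closed, countably compact, not necessarily normal subgroup with $\{V_n:n\in\omega\}$ a base at $H$ (the base property needs the argument of step (c) in the proof of Theorem~\ref{Th6}, not just the definition of $H$). Closedness of $p$ then follows as in part (2) of that proof, and metrizability of the coset space $G/H$ is obtained not from a Birkhoff--Kakutani pseudometric for a normal subgroup but from Lemma~\ref{L1}: $G/H$ is first-countable because $\{p(V_n)\}$ is a base at $p(e)$ and $G/H$ is homogeneous, and $H$ is neutral because for any open $U\ni e$ one can choose $n$ with $V_n\subseteq HU$ and then $V_{n+1}H\subseteq V_{n+1}V_{n+1}\subseteq V_n\subseteq HU$. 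With that rerouting the necessity direction closes; as written, it does not.
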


 Recall that a continuous mapping $f:X\rightarrow Y$ is called {\it sequential-perfect} if $f$ is closed and $f^{-1}(y)$ is a sequentially compact set for each $y\in Y$. Recall that a continuous mapping $f:X\rightarrow Y$ is {\it strongly sequential-perfect} if $f$ is closed and $f^{-1}(F)$ is a sequentially compact set for each sequentially compact set $F\subseteq Y$. The next two theorems appear in \cite{LXC}.

\begin{theorem}\cite[Theorem 2.4]{LXC}\label{Th2}
 A topological group $G$ is a strict $q$-space if and only if for each open neighborhood $U$ of the identity in $G$ there is a closed sequentially compact subgroup $H\subseteq U$ such that the quotient space $G/H$ is metrizable and the canonical quotient mapping $p:G\rightarrow G/H$ is sequential-perfect.
\end{theorem}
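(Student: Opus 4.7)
The plan is to treat the two implications separately; the necessity direction carries essentially all the work.

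For sufficiency, homogeneity of the topological group reduces the verification of the strict $q$-property to the identity $e$. Picking $H$ from the hypothesis (say for $U = G$) and a decreasing countable local base $\{W_n\}$ at $p(e)$ in the metrizable quotient $G/H$, set $V_n := p^{-1}(W_n)$. Each $V_n$ is an open neighborhood of $e$, and $\bigcap_n V_n = p^{-1}(p(e)) = H$, which is sequentially compact by hypothesis. For any sequence $x_n \in V_n$, the image $p(x_n) \in W_n$ converges to $p(e)$; if $\{x_n\}$ had no accumulation point in $G$, then each tail $\{x_m : m \geq k\}$ would be closed in $G$, so the closed map $p$ would force $p(e) \in p(\{x_m : m \geq k\})$ for every $k$, producing indices $n_k \geq k$ with $x_{n_k} \in H$. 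Infinitely many such distinct $x_{n_k}$ then live in the sequentially compact $H$, yielding a convergent subsequence whose limit is an accumulation point of $\{x_n\}$, a contradiction.

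For necessity, fix an open neighborhood $U$ of $e$ and a strict $q$-sequence $\{U_n\}$ at $e$ with $\bigcap_n U_n$ sequentially compact. Using continuity of multiplication and inversion at $e$, inductively select symmetric open neighborhoods $V_n$ of $e$ with $V_0 \subseteq U \cap U_0$ and $V_{n+1}^2 \subseteq V_n \cap U_{n+1}$. Then $H := \bigcap_n V_n$ is a subgroup (by the squaring condition together with symmetry), is contained in $U$, and sits inside $\bigcap_n U_n$, so sequential compactness descends to $H$. The standard group-theoretic inclusion $\overline{V_{n+1}} \subseteq V_n$ shows $H = \bigcap_n \overline{V_n}$, hence $H$ is closed. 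For countable character, suppose some open $W \supseteq H$ contains no $V_n$ and pick $x_n \in V_n \setminus W$; the strict $q$-property at $e$ yields an accumulation point $x$ of $\{x_n\}$, and since $x_n \in V_k$ for $n \geq k$, accumulation places $x$ in $\overline{V_k}$ for every $k$, hence in $\bigcap_n V_n = H \subseteq W$; openness of $W$ would then force infinitely many $x_n$ into $W$, a contradiction. So $H$ has countable character in $G$, and a classical theorem then delivers metrizability of the coset space $G/H$. Each fiber $xH$ of $p$ is a left translate of $H$, hence sequentially compact.

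The main obstacle will be showing that $p$ is closed, since sequential compactness of $H$ (rather than compactness) obstructs the familiar tube lemma via finite subcovers. I would use first-countability of $G/H$ to reduce closedness of $p$ to sequential closedness: given a closed $F \subseteq G$ and a sequence $y_n = p(x_n) \rightarrow y$ in $G/H$ with $x_n \in F$, fix $x_0 \in p^{-1}(y)$ and exploit openness of $p$ together with the local base $\{p(x_0 V_n)\}$ at $y$ to arrange (after passing to a subsequence) $x_n = x_0 v_n h_n$ with $v_n \in V_n$ and $h_n \in H$. Sequential compactness of $H$ extracts a subsequence $h_{n_k} \rightarrow h_\infty \in H$; the strict $q$-property applied to $\{v_n\}$, combined with the nesting $\overline{V_{n+1}} \subseteq V_n$, confines the accumulation points of $\{v_n\}$ to $H$, and a careful diagonal argument using continuity of the group operation extracts an accumulation point of $\{x_{n_k}\}$ of the form $x_0 v_\infty h_\infty$, which lies in $F$ by closedness and projects to $y$ under $p$, giving $y \in p(F)$.
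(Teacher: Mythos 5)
Your overall architecture matches what this paper does for the parallel $\omega$-balanced versions (Theorems \ref{Th6} and \ref{Th8}; the statement itself is only cited here from \cite{LXC}): build a decreasing sequence of symmetric neighborhoods $V_n$ with $V_{n+1}^2\subseteq V_n\cap U_{n+1}$, let $H=\bigcap_n V_n=\bigcap_n\overline{V_n}$, and read off the subgroup, closedness, containment in $\bigcap_n U_n$, and countable-character properties. Those steps, and your sufficiency argument via closed tails and the fiber $H$, are correct.

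The one genuine gap is the sentence ``a classical theorem then delivers metrizability of the coset space $G/H$.'' Countable character of $H$ gives you only that $G/H$ is first countable (the sets $p(V_n)$ form a base at $p(e)$, then use homogeneity), and first countability of a coset space does \emph{not} by itself yield metrizability: since $H$ is not assumed (and in your construction is not arranged to be) invariant, $G/H$ is merely a homogeneous space, so Birkhoff--Kakutani is unavailable. The result actually needed is Lemma \ref{L1} (Fern\'andez--S\'anchez--Tkachenko): a closed \emph{neutral} subgroup with first-countable quotient has metrizable quotient. Fortunately your construction supplies neutrality for free, but you must say so: given an open $U\ni e$, the set $HU$ is an open neighborhood of $H$, so some $V_n\subseteq HU$ by countable character, and then $V_{n+1}H\subseteq V_{n+1}^2\subseteq V_n\subseteq HU$. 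Without this verification the metrizability claim is unsupported.

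A secondary remark on closedness of $p$: your sequential route (reduce to sequential closedness in the first-countable quotient, decompose $x_n=x_0v_nh_n$, extract a limit of $h_n$ and an accumulation point of $v_n$, and recombine by continuity of multiplication) does work --- the recombination is a routine neighborhood argument rather than a ``diagonal'' one --- but it is a detour. The paper's argument is purely algebraic and uses no compactness at all: for closed $F$ and $x\notin FH$, pick $V$ in the base with $V^2\subseteq G\setminus F^{-1}x$; symmetry gives $xV\cap FV=\emptyset$, hence $xV\cap FH=\emptyset$ since $H\subseteq V$, so $FH$ is closed. That version is shorter and also transfers verbatim to the $q$-space and strong $q$-space variants, whereas your sequential argument is tied to sequential compactness of $H$.
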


\begin{theorem}\cite[Theorem 2.6]{LXC}\label{Th3}
 A topological group $G$ is a strong $q$-space if and only if for each open neighborhood $U$ of the identity in $G$, there is a closed sequentially compact subgroup $H$ of countable character, which is a strong $q$-sequence at each $x\in H$, in $G$ such that the quotient space $G/H$ is metrizable and the canonical quotient mapping $p:G\rightarrow G/H$ is strongly sequential-perfect.
\end{theorem}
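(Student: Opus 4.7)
The plan is to prove the two implications separately. Sufficiency will reduce to a standard pull-back argument along the quotient map $p$. Necessity is the harder direction: I will first construct a candidate subgroup $H$ from a strong $q$-sequence at the identity, and then verify a cascade of properties culminating in strong sequential-perfectness of $p$, which I expect to be the main technical hurdle.

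\textbf{Sufficiency.} Fix $g\in G$, any open neighborhood $O$ of the identity, and take the associated pair $(H,\{V_n\})$ provided by the hypothesis. Since $G/H$ is metrizable, $p(g)$ has a descending countable base $\{W_n\}$ in $G/H$. I would set $U_n=p^{-1}(W_n)\cap gV_n$; each is an open neighborhood of $g$. For any sequence $(x_n)$ with $x_n\in U_n$ for every $n$, one has $p(x_n)\to p(g)$, so $F=\{p(x_n):n\in\omega\}\cup\{p(g)\}$ is sequentially compact in $G/H$. By strong sequential-perfectness of $p$, $p^{-1}(F)$ is sequentially compact, and $(x_n)\subseteq p^{-1}(F)$ admits a convergent subsequence; hence $\{U_n\}$ is a strong $q$-sequence at $g$.

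\textbf{Necessity: constructing $H$.} Given $O$, pick a strong $q$-sequence $\{U_n\}$ at $e$ with $U_0\subseteq O$, and inductively build a descending sequence of symmetric open neighborhoods $\{V_n\}$ of $e$ with $V_{n+1}^2\subseteq V_n\cap U_n$. Put $H=\bigcap_{n}V_n$. A standard symmetry/submultiplicativity computation will show that $H$ is a closed subgroup contained in $O$. Any sequence $(h_k)\subseteq H$ satisfies $h_k\in V_k\subseteq U_{k-1}$, so the strong $q$-property at $e$ gives a convergent subsequence whose limit lies in $H$ by closedness; thus $H$ is sequentially compact. Because each $V_n$ is an open neighborhood of every $x\in H$, the same argument shows $\{V_n\}$ is a strong $q$-sequence at each $x\in H$. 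For countable character: if some open $U\supseteq H$ contained no $V_n$, I could pick $y_n\in V_n\setminus U$ and extract a convergent subsequence with limit $y^*$; the $y_n$ lie in the closed set $G\setminus U$, so $y^*\in G\setminus U$, yet $y^*\in H\subseteq U$, a contradiction.

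\textbf{Metrizability and the main obstacle.} Metrizability of $G/H$ will follow from the classical coset-space criterion: a closed subgroup of countable character in a topological group produces a metrizable coset space. The delicate point, and the step I expect to be the main obstacle, is strong sequential-perfectness of $p$. Given sequentially compact $F\subseteq G/H$ and $(x_n)\subseteq p^{-1}(F)$, I would pass to a subsequence with $p(x_{n_k})\to y$ and fix $z\in p^{-1}(y)$. Using openness of $p$, refine a base $\{W_k\}$ of $y$ in $G/H$ so that $W_k\subseteq p(zV_k)$; this yields $a_k\in zV_k$ with $p(a_k)=p(x_{n_k})$, and hence a factorization $x_{n_k}=zv_kh_k^{-1}$ with $v_k\in V_k$ and $h_k\in H$. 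Applying the strong $q$-property at $e$ to $(v_k)$ extracts a convergent subsequence $v_{k_j}\to v$; applying sequential compactness of $H$ to $(h_{k_j}^{-1})$ then extracts a further subsubsequence with $h_{k_{j_i}}^{-1}\to h^{-1}\in H$, whence $x_{n_{k_{j_i}}}\to zvh^{-1}$. This coset-lifting is precisely where all three of countable character of $H$ in $G$, the uniform strong-$q$ behavior of $\{V_n\}$ across $H$, and sequential compactness of $H$ must combine, and weakening any one of them would break the argument.
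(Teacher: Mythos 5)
First, note that the paper does not prove this statement itself --- Theorem~\ref{Th3} is imported from [LXC] --- so your attempt can only be measured against the closely analogous arguments the paper does give (the sufficiency parts of Theorems~\ref{Th6} and~\ref{Th9}). Your overall architecture matches those: sufficiency by pulling back a countable base at $p(g)$ through the strongly sequential-perfect map $p$, and necessity by building symmetric open sets $V_{n+1}^2\subseteq V_n\cap U_n$ inside a strong $q$-sequence, taking $H=\bigcap_n V_n$, and verifying that $H$ is a closed sequentially compact subgroup of countable character with $\{V_n\}$ a strong $q$-sequence at each point of $H$. Those verifications, and your coset-lifting argument $x_{n_k}=zv_kh_k$ with $v_k\in V_k$, $h_k\in H$ for the preimage condition, are correct (modulo routine reindexing to arrange $p(x_{n_k})\in W_k$, and the observation that $\bigcap_n\overline{V_n}=\bigcap_n V_n=H$, which is what actually forces the limit $y^*$ into $H$ in your countable-character step).

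There are, however, two genuine gaps in the necessity direction. First, ``strongly sequential-perfect'' requires $p$ to be a \emph{closed} mapping, and you never prove this; it is not automatic for a sequentially compact (non-compact) $H$. The paper's Theorem~\ref{Th6} supplies the missing argument: for closed $F\subseteq G$ and $x\notin FH$, countable character gives $V_{n+1}$ with $V_{n+1}^2\subseteq V_n\subseteq G\setminus F^{-1}x$, whence $xV_{n+1}\cap FH=\emptyset$ by symmetry, so $FH$ is closed. Second, your metrizability step appeals to a ``classical coset-space criterion'' asserting that a closed subgroup of countable character yields a metrizable quotient; no such criterion holds in that generality. What is available is Lemma~\ref{L1} (Fern\'andez--S\'anchez--Tkachenko): one needs $G/H$ first-countable (which your countable character plus openness of $p$ does give) \emph{and} $H$ neutral. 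Neutrality is not a consequence of countable character alone, but it does follow from your construction: if $V_n\subseteq HU$ then $V_{n+1}H\subseteq V_{n+1}^2\subseteq V_n\subseteq HU$. Both gaps are repairable with the structure you already built, but as written the proof establishes neither the closedness of $p$ nor, legitimately, the metrizability of $G/H$.
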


 Recall that a subset $A$ of a space $X$ is of {\it countable character} if there is a countable family $ \{U_{n}:n\in \omega\} $ of open neighborhoods of $ A $ such that for each open neighborhood $ U $ of $ A $ there is a $ U_{n} $ such that $ U_{n}\subseteq U $.

 Feathered topological groups are very important in topological algebra \cite{AT}. Recall that a topological group $G$ is {\it feathered} \cite[p.235]{AT} if $G$ has a non-empty compact set $K$ of countable character in $G$.

 Pasynkov gave characterizations of feathered topological group.

 \begin{theorem}\cite[Theorem 4.3.20]{AT}\label{Th4}
 A topological group G is feathered if and only if it contains a compact
 subgroup H such that the quotient space G/H is metrizable.
 \end{theorem}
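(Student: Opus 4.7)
The plan is to establish each direction separately. For the easier ``if'' direction, I would show that a compact subgroup $H$ with metrizable quotient automatically has countable character in $G$. Pick a countable base $\{W_n : n \in \omega\}$ at the coset $eH$ in $G/H$, and let $U_n = p^{-1}(W_n)$. Each $U_n$ is open and contains $H$. For any open $U \supseteq H$, the complement $G \setminus U$ is closed and disjoint from $H$, so its image $p(G \setminus U)$ is closed in $G/H$ and misses $eH$, because the canonical quotient map by a compact subgroup is perfect (hence closed) and $p^{-1}(eH) = H \subseteq U$. Some $W_n$ then avoids $p(G \setminus U)$, yielding $U_n \subseteq U$. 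Hence $H$ is a non-empty compact set of countable character in $G$, so $G$ is feathered.

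For the harder ``only if'' direction, I would start from a compact $K$ of countable character with base $\{V_n : n \in \omega\}$. After left-translating (which preserves the character property since translations are homeomorphisms), we may assume $e \in K$. The core step is to construct a decreasing sequence of symmetric open neighborhoods $\{O_n : n \in \omega\}$ of $e$ satisfying both $O_{n+1} \cdot O_{n+1} \cdot O_{n+1} \subseteq O_n$ and $O_n \cdot K \cdot O_n \subseteq V_n$. The first condition permits a Birkhoff--Kakutani construction of a continuous left-invariant pseudometric $\rho$ on $G$ whose small balls refine the $\{O_n\}$, while the second confines the resulting subgroup inside $K$.

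Set $H = \{x \in G : \rho(x, e) = 0\}$, which equals $\bigcap_n O_n$ and is a closed subgroup. Compactness of $H$ follows from the inclusion $H \subseteq \bigcap_n V_n = K$; the last equality uses that any point outside the compact $K$ admits, by regularity, an open neighborhood disjoint from $K$, and hence lies outside some $V_n$ by countable character. Since the theorem asks only for a compact subgroup (not necessarily normal), the quotient $G/H$ is the space of left cosets, and its metrizability follows because $\rho$ descends to a metric generating the topology of $G/H$; equivalently, $G/H$ is regular and first countable at the identity coset, to which one can apply Birkhoff--Kakutani once more (in the homogeneous space sense).

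The main obstacle is the simultaneous construction of $\{O_n\}$ satisfying the triple-product condition $O_{n+1}^3 \subseteq O_n$ \emph{together with} the absorption condition $O_n K O_n \subseteq V_n$; this exploits compactness of $K$ and joint continuity of the group operations to shrink neighborhoods of $e$ uniformly across $K$, but demands care to maintain both conditions at each inductive step. Once this construction is in hand, compactness of $H$ and metrizability of $G/H$ follow essentially formally from the properties of $\rho$ and the countable character of $K$.
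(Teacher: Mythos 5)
The paper does not actually prove this statement---it is quoted from Arhangel'skii--Tkachenko (Theorem 4.3.20) as a known result of Pasynkov---so there is no in-paper proof to compare against; I will therefore assess your argument on its own. Your ``if'' direction is correct and complete in outline: pulling back a countable base at $eH$ along the perfect quotient map $p:G\to G/H$ (perfect because $H$ is compact) does give $H$ countable character in $G$. The architecture of your ``only if'' direction is also the standard one: shrink to a neighborhood system $\{O_n\}$ of $e$ with $O_{n+1}^{3}\subseteq O_n$ and $O_nKO_n\subseteq V_n$, set $H=\bigcap_n O_n$, and use Birkhoff--Kakutani. The construction of the $O_n$ is indeed routine (compactness of $K$ plus joint continuity), and $H\subseteq\bigcap_n V_n=K$ gives compactness of $H$ as you say.

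The genuine gap is in the last step, precisely where you claim things ``follow essentially formally'': the assertion that $\rho$ descends to a metric \emph{generating} the quotient topology on $G/H$. What the prenorm gives you for free is only that the metric topology on $G/H$ is \emph{coarser} than the quotient topology; equality is equivalent to the statement that $\{O_n\}$ (equivalently $\{O_nH\}$) is a neighborhood base at $H$ in $G$, i.e.\ that $G/H$ is first countable at $eH$. This does \emph{not} follow formally from ``$H=\bigcap_n O_n$ is compact and $O_{n+1}^{3}\subseteq O_n$'': for instance, in $\ell^2$ take $O_n=\{x:\|Tx\|<2^{-n}\}$ for an injective compact diagonal operator $T$; then $\bigcap_n O_n=\{0\}$ is compact and $O_{n+1}+O_{n+1}\subseteq O_n$, yet no $O_n$ is contained in the unit ball. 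To close the gap you must use the absorption condition $O_nKO_n\subseteq V_n$ a second time, together with the countable character of $K$: given open $U\supseteq H$ with no $O_n\subseteq U$, pick $x_n\in O_n\setminus U$; then $x_n\in V_n$, so by compactness of $K$ and the fact that $\{V_n\}$ is a (decreasing) base at $K$ the sequence clusters at some $y\in K$; since $x_m\in O_n$ for all $m\geq n$ and $\overline{O_{n+1}}\subseteq O_{n+1}^{2}\subseteq O_n$, the cluster point $y$ lies in $\bigcap_n O_n=H\subseteq U$, contradicting $x_n\notin U$ with $U$ open. With that lemma in hand the rest of your argument (well-definedness of the induced metric on left cosets, and its agreement with the quotient topology at every coset by homogeneity) does go through, so the proof is repairable, but as written you have misplaced the difficulty: the inductive construction you flag as the main obstacle is easy, while the step you call formal is the one that needs the real argument.
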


 Recall that a paratopological group $ G $ is called {\it $\omega$-balanced} if for every neighbourhood $U$ of the identity $e$ in $G$, there exists a countable family $\gamma$ of open neighbourhoods of $e$ in $G$ such that for each $ x\in G $ one can find $ V\in \gamma $ satisfying $ xVx^{-1}\subseteq U $.

 In 2022, Xie and Yan proved the following result:
 \begin{theorem}\cite[Proposition 2.3]{XY}\label{Th5}
 A topological group $G$ is $\omega$-balanced and feathered if and only if for each open neighbourhood
 $O$ of the identity in $G$ there is a compact invariant subgroup $H$ such that $ H\subseteq O $ and the quotient group $G/H$ is metrizable.
 \end{theorem}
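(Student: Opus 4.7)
The statement is a biconditional; the sufficient (``if'') direction is routine, while the necessary (``only if'') direction carries the substance.

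\emph{Sufficiency.} Assume that every open neighbourhood $O$ of $e$ contains a compact invariant subgroup $H$ with $G/H$ metrizable. Featheredness is immediate: fix any such $H$, lift a countable base $\{W_n\}$ at $eH$ in $G/H$ through the open quotient map $p\colon G\to G/H$ to produce open neighbourhoods $\{p^{-1}(W_n)\}$ of $H$ in $G$; the standard compact-saturation trick (for compact $H\subseteq V$ open there exists open $V_0\ni e$ with $HV_0=V_0H\subseteq V$, and $HV_0$ is $H$-saturated) shows this family is a base at $H$, so $H$ has countable character and $G$ is feathered. For $\omega$-balancedness, given open $U\ni e$, pick such $H\subseteq U$; the metrizable quotient $G/H$ is itself $\omega$-balanced (first-countable), and normality of $H$ gives $x\,p^{-1}(W_n)\,x^{-1}=p^{-1}(p(x)W_np(x)^{-1})$, so a countable base $\{W_n\}$ at $e_{G/H}$ contained in a saturated open $p^{-1}(W)\subseteq U$ pulls back to a countable $\omega$-balanced witness for $U$ in $G$.

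\emph{Necessity.} Assume $G$ is $\omega$-balanced and feathered, and fix open $O\ni e$. Theorem~\ref{Th4} yields a compact subgroup $K\subseteq G$ with $G/K$ metrizable, but $K$ need not be normal in $G$, which is the root of the difficulty. By $\omega$-balancedness choose a countable family $\gamma=\{V_n:n\in\omega\}$ of open neighbourhoods of $e$, each contained in $O$, such that for every $x\in G$ some $V_n\in\gamma$ satisfies $xV_nx^{-1}\subseteq O$. The remaining task is to manufacture a compact invariant subgroup $H\subseteq O$ with $G/H$ metrizable. I would proceed by iteratively shrinking $K$: construct a decreasing chain $K=K_0\supseteq K_1\supseteq\cdots$ of compact subgroups in which $K_{n+1}$ is obtained by intersecting $K_n$ with suitable conjugates of members of $\gamma$, so that the limit $H=\bigcap_n K_n$ becomes invariant under every $x\in G$, stays inside $O$, and retains enough of the countable character of $K$ that a Pasynkov-type argument applied to $G/H$ (using the image of $K/H$ as a compact subgroup with metric quotient $G/K$) yields metrizability of $G/H$.

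\emph{Main obstacle.} The whole proof hinges on the necessity direction, and within it on the normalization step. The naive normal core $\bigcap_{x\in G}xKx^{-1}$ is normal inside $K$ but generally has a non-metrizable (or even trivial) quotient, while the normal closure of $K$ is normal but typically not compact. The role of the $\omega$-balanced hypothesis is precisely to permit the iterative construction to converge countably: only countably many conjugations are needed at each stage to approximate full invariance, which is what keeps the intersection simultaneously compact, invariant, contained in $O$, and of countable character with metrizable quotient.
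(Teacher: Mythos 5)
The paper does not prove this statement itself (it is quoted from [XY, Proposition~2.3]), but the intended argument is exactly the machinery displayed in the sufficiency half of the proof of Theorem~\ref{Th6}, so that is the right benchmark. Your sufficiency direction is essentially correct and matches the paper's pattern: the saturation $V=(G/H)\setminus p(G\setminus O)$ gives $p^{-1}(V)\subseteq O$, so $G$ is range-metrizable and hence $\omega$-balanced, and featheredness is immediate from the compact subgroup of countable character; your pull-back of a countable base of $G/H$ through $p$ is a fine variant of this.

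The necessity direction, however, has a genuine gap: you never actually construct $H$. Your plan of ``iteratively shrinking $K$ by intersecting $K_n$ with suitable conjugates of members of $\gamma$'' is not the right move and does not obviously produce a subgroup at all, let alone an invariant one; moreover your claim that the resulting $H$ ``retains enough of the countable character of $K$'' is unsupported (a closed subgroup of a compact subgroup of countable character need not itself have countable character in $G$), and the Pasynkov-type step you invoke would only give that $G/H$ is feathered, not metrizable. The standard construction --- the one carried out in detail in the proof of Theorem~\ref{Th6} --- works with open sets rather than with $K$: starting from a decreasing base $\{U_n\}$ at the compact subgroup $K$ with $U_0\subseteq O$, one builds a countable family $\gamma=\bigcup_n\gamma_n$ of open \emph{symmetric} neighbourhoods of $e$ closed under the two operations ``for each $W\in\gamma_n$ there is $V\in\gamma_{n+1}$ with $V^2\subseteq W$'' and ``for each $W\in\gamma_n$ and \emph{every} $x\in G$ there is $V\in\gamma_{n+1}$ with $xVx^{-1}\subseteq W$'' (the latter achievable with countably many sets precisely because $G$ is $\omega$-balanced), with each member of $\gamma_n$ inside $U_n\cap O$. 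Then $H=\bigcap\gamma$ is automatically a closed invariant subgroup contained in $K\cap O$ (hence compact), the symmetry-plus-square condition shows $\gamma$ is a base at $H$, and first countability of $G/H$ together with neutrality of $H$ (from $V^2\subseteq HU$ giving $VH\subseteq HU$) yields metrizability via Lemma~\ref{L1}. Without this explicit construction your proof of the substantive direction is only a statement of intent.
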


 In view of Theorem \ref{Th5}, it is natural to ask how to characterize $q$-space, strict $q$-space and strong $q$-space under $\omega$-balanced topological groups.

 This paper is organized as follows. First, it is proved that a topological group $G$ is $\omega$-balanced and a $q$-space if and only if for each open neighborhood $O$ of the identity in $G$, there is a countably compact invariant subgroup $H$ which is of countable character in $G$, such that $H \subseteq O$ and the canonical quotient mapping $p:G\rightarrow G/H$ is quasi-perfect and the quotient group $G/H$ is metrizable(see Theorem \ref{Th6}). As a corollary, we show that a topological group $G$ is $\omega$-narrow and a $q$-space if and only if for each open neighborhood $O$ of the identity in $G$, there is a countably compact invariant subgroup $H$ which is of countable character in $G$, such that $H\subseteq O$ and the canonical quotient mapping $p:G\rightarrow G/H$ is quasi-perfect and the quotient group $G/H$ is separable and metrizable(see Corollary \ref{C1}). Next, we prove that a topological group $G$ is $\omega$-balanced and a strict $q$-space if and only if for each open neighborhood $O$ of the identity in $G$, there is a closed sequentially compact invariant subgroup $H$ which is of countable character in $G$, such that $H \subseteq O$ and the canonical quotient mapping $p:G\rightarrow G/H$ is sequential-perfect and the quotient group $G/H$ is metrizable(see Theorem \ref{Th8}). As a corollary, we show that a topological group $G$ is $\omega$-narrow and a strict $q$-space  if and only if for each open neighborhood $O$ of the identity in $G$, there is a closed sequentially compact invariant subgroup $H$ which is of countable character in $G$, such that $H \subseteq O$ and the canonical quotient mapping $p:G\rightarrow G/H$ is sequential-perfect and the quotient group $G/H$ is separable and metrizable(see Corollary \ref{C2}). Finally, we show that a topological group $G$ is $\omega$-balanced and a strong $q$-space if and only if for each open neighborhood $O$ of the identity in $G$, there is a closed sequentially compact invariant subgroup $H$ of countable character $\{V_{n}:n\in \omega\} $, such that $H\subseteq O$  and $\{V_{n}:n\in\omega\}$ is a strong $q$-sequence at each $ y\in H $, in $G$ such that the canonical quotient mapping $p:G\rightarrow G/H$ is strongly sequential-perfect and the quotient group $G/H$ is metrizable(see Theorem \ref{Th9}). As a corollary, we show that a topological group $G$ is $\omega$-narrow and a strong $q$-space if and only if for each open neighborhood $O$ of the identity in $G$, there is a closed sequentially compact invariant subgroup $H$ of countable character $\{V_{n}:n\in \omega\} $,  such that $H \subseteq O$ and $\{V_{n}:n\in\omega\} $ is a strong $q$-sequence at each $ y\in H $, in $ G $ such that the canonical quotient mapping $p:G\rightarrow G/H$ is strongly sequential-perfect and the quotient group $G/H$ is separable and metrizable(see Corollary \ref{C3}).

\section{Characterizations of generalized countably compact topological groups}\label{Sec:2}

 In view of Theorems \ref {Th4} and \ref {Th5}, it is naturally to ask how to give "internal" characterizations of $q$-spaces, strict $q$-spaces and strong $q$-spaces under $\omega$-balanced topological groups. To study this question, we firstly establish some very useful lemmas.

 A subgroup $H$ of a topological group $G$ is called {\it neutral} if for every open neighborhood $U$ of the identity $e$ in $G$, there exists an open neighborhood $V$ of $e$ such that $HV \subseteq UH$ (equivalently, $V H \subseteq HU$) \cite{JP}.

 \begin{lemma}\cite[Theorem 3.1]{FST}\label{L1}
 If $H$ is a closed neutral subgroup of a topological group $G$ such that $G/H$ is first-countable, then $G/H$ is metrizable.
 \end{lemma}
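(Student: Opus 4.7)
The plan is to produce a countable base of entourages for a compatible uniformity on $G/H$ and then invoke the classical metrization theorem for Hausdorff uniform spaces (every uniformity with a countable base is metrizable). This mirrors the proof of the Birkhoff--Kakutani theorem for topological groups, with the neutrality of $H$ playing the role that normality plays when $G/H$ is itself a group.

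First, using first-countability of $G/H$ at the coset $\pi(e)=H$ together with openness of the quotient map $\pi:G\to G/H$, I would extract a decreasing sequence $\{U_n:n\in\omega\}$ of open neighborhoods of $e$ in $G$ such that $\{U_nH:n\in\omega\}$ is a base of open sets containing $H$. Then I would inductively refine this sequence, using continuity of multiplication, continuity of inversion, and the neutrality condition, to obtain a decreasing sequence $\{V_n:n\in\omega\}$ of symmetric open neighborhoods of $e$ satisfying $V_{n+1}V_{n+1}\subseteq V_n\subseteq U_n$ together with $HV_n\subseteq U_nH$. Neutrality is exactly what permits the last requirement at each stage.

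Next, I would define entourages on $G/H$ by $E_n=\{(xH,yH):x^{-1}y\in V_nH\}$ and verify that $\{E_n:n\in\omega\}$ is a countable base for a Hausdorff uniformity compatible with the quotient topology. Compatibility at each coset $xH$ reduces to the first step via left translation by $x$, since $E_n(xH)=xV_nH/H$ is a basic quotient neighborhood. Hausdorffness follows from $H$ being closed, which forces $\bigcap_n V_nH=H$. The symmetry axiom is where neutrality is indispensable: $E_n^{-1}$ corresponds to the set $HV_n$, which by neutrality lies inside $U_nH$ and therefore inside some $E_m$ after further refinement of the $V$-sequence. The composition axiom is handled using $V_{n+1}V_{n+1}\subseteq V_n$ together with the same neutrality trick to absorb factors of $H$ that appear in the middle of products.

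Finally, applying the classical theorem that a Hausdorff uniform space admitting a countable base of entourages is metrizable yields a metric on $G/H$ inducing its quotient topology. The main obstacle will be the careful bookkeeping in the second step: the inductive choice of the $V_n$ must leave enough slack so that each invocation of neutrality in steps three and four lands inside an entourage already in the countable base, since every time one moves $H$ across a product one pays a loss that must be budgeted in advance.
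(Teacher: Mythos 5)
This lemma is not proved in the paper at all: it is imported verbatim from Fern\'andez--S\'anchez--Tkachenko \cite[Theorem 3.1]{FST}, so there is no in-paper argument to match yours against. Your uniformity-based plan is the standard (and, to my knowledge, essentially the cited source's) route --- a Birkhoff--Kakutani-type argument in which neutrality substitutes for normality of $H$ --- and as a strategy it is sound: the refinement $V_{n+1}V_{n+1}\subseteq V_n$, $HV_{n+1}\subseteq V_nH$ is achievable by induction, the induced uniform topology agrees with the quotient topology by the left-translation/saturation argument you indicate, and Hausdorffness follows from $\bigcap_n V_nH=H$, which holds because $H$ is closed and $\{U_nH\}$ is a base of \emph{saturated} open sets at $H$. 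The one genuine subtlety you do not name is that $E_n=\{(xH,yH):x^{-1}y\in V_nH\}$ is not well defined as a subset of $(G/H)^2$: the condition depends on the representative $x$ (replacing $x$ by $xh$ turns $V_nH$ into $h^{-1}V_nH\subseteq HV_nH$). The clean fix is to define $E_n$ by $x^{-1}y\in HV_nH$; neutrality then gives $HV_nH\subseteq U_nH$ so the entourages remain small, and with symmetric $V_n$ the sets $HV_nH$ are symmetric for free, which eliminates the separate symmetry step you were budgeting for. With that adjustment the composition axiom follows from $HV_{n+2}HV_{n+2}H\subseteq HV_nH$, and the metrization theorem for Hausdorff uniformities with countable bases finishes the proof. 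Also note that $\{U_nH\}$ need only be a base for the saturated open neighbourhoods of $H$, not for all of them --- the stronger ``countable character in $G$'' is a different property that the paper uses elsewhere and that you should not silently assume here.
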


\begin{lemma}\label{L2} \cite[Lemma 1.7.6]{LY}
Let $\{U_n:n\in \omega\}$ be a decreasing sequence of open subsets in a topological space $X$ such that $\bigcap_{n\in \omega}U_n=\bigcap_{n\in \omega}\overline{U_n}$. Put $C=\bigcap_{n\in \omega}U_n$. Then the following statements are equivalent:
\begin{enumerate}
\item[(1)] $\{U_n:n\in \omega\}$ is a $q$-sequence in $X$;
\item[(2)] the family $\{U_n:n\in \omega\}$ is a base at $C$ in $X$ and $C$ is countably compact in $X$.
\end{enumerate}
\end{lemma}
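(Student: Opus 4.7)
The plan is to establish the two directions of the equivalence separately, making essential use of the fact that $C=\bigcap_n U_n=\bigcap_n\overline{U_n}$ is closed as an intersection of closed sets, together with the decreasing nature of the family $\{U_n\}$.

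For $(1)\Rightarrow(2)$, I will first verify that $C$ is countably compact. Any sequence $\{y_n\}\subseteq C$ satisfies $y_n\in C\subseteq U_n$, so the $q$-sequence hypothesis furnishes an accumulation point $y\in X$; since $y\in\overline{\{y_n:n\in\omega\}}\subseteq\overline{C}=C$, the point $y$ lies in $C$. Next, to show that $\{U_n\}$ is a base at $C$, I argue by contradiction: if some open $V\supseteq C$ contains no $U_n$, choose $x_n\in U_n\setminus V$ and extract an accumulation point $y$ using the $q$-sequence property. Since $\{x_n\}\subseteq X\setminus V$ (closed), one gets $y\notin V$; yet for every $k$, the decreasing property gives $x_n\in U_k$ whenever $n\geq k$, so $y\in\overline{U_k}$ and hence $y\in\bigcap_k\overline{U_k}=C\subseteq V$, a contradiction.

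For $(2)\Rightarrow(1)$, suppose $x_n\in U_n$ and the sequence $\{x_n\}$ has no accumulation point in $X$. I will first observe that the image set $A=\{x_n:n\in\omega\}$ must be infinite (else the pigeonhole principle produces an accumulation point) and that $A$ itself has no accumulation point (any such point would, by the Hausdorff axiom, be an accumulation point of the sequence). The countable compactness of $C$ then forces $A\cap C$ to be finite, so after passing to a subsequence I may assume every $x_n$ lies outside $C$. The image set $B=\{x_n:n\in\omega\}$ of this subsequence inherits the no-accumulation-point property, and in the Hausdorff space $X$ this makes $B$ closed. Since $B\cap C=\emptyset$, the open set $X\setminus B$ is a neighborhood of $C$, so the base property supplies $n_0$ with $U_{n_0}\subseteq X\setminus B$. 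But then for $n\geq n_0$, $x_n\in U_n\subseteq U_{n_0}\subseteq X\setminus B$, contradicting $x_n\in B$.

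The main subtlety sits in $(2)\Rightarrow(1)$: one must carefully distinguish between accumulation points of the sequence $\{x_n\}$ and of its image set $A$, and justify that a set without accumulation points is closed in a Hausdorff (equivalently, $T_1$) space. Once these two technical observations are nailed down, both the extraction of the subsequence avoiding $C$ and the derivation of a contradiction from the base property of $\{U_n\}$ are entirely routine.
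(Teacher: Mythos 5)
The paper offers no proof of this lemma --- it is imported verbatim from the reference [LY, Lemma~1.7.6] --- so there is no in-paper argument to compare yours against. Judged on its own, your proof is correct and is the standard argument for this kind of statement. In $(1)\Rightarrow(2)$ both halves are sound: the cluster point of a sequence in $C$ lands in $\overline{C}=C$ because $C=\bigcap_n\overline{U_n}$ is closed, and the base-at-$C$ argument via points $x_n\in U_n\setminus V$ correctly exploits the decreasing hypothesis to force the cluster point into $\bigcap_k\overline{U_k}=C\subseteq V$.

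In $(2)\Rightarrow(1)$ the one place worth tightening is the passage from ``$A\cap C$ is finite'' to ``only finitely many indices $n$ have $x_n\in C$'': these are not the same statement a priori, since a single value could recur for infinitely many indices. It does follow here, because a value attained infinitely often would itself be an accumulation point of the sequence, contradicting your standing assumption --- this is exactly the pigeonhole observation you already made when showing $A$ is infinite, so the gap is expository rather than mathematical. The remaining steps (an infinite subset of the countably compact set $C$ would have an $\omega$-accumulation point, hence a limit point of $A$; a set without limit points is closed; the base property then traps a tail of the sequence inside $X\setminus B$) are all correct, and your use of $T_1$ (guaranteed by the paper's blanket Hausdorff assumption) to pass between limit points of the set $A$ and cluster points of the sequence is exactly the right care to take.
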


A topological group $ G $ is called {\it range-metrizable} if for every open neighbourhood $ U $ of the identity $ e $ in $ G $, there exists a continuous homomorphism $ p $ of $ G $ onto a metrizable group $ H $ such that $ p^{-1}(V)\subseteq U $, for some open neighbourhood $ V $ of the identity in $ H $.

Now we give an "internal" characterization of $q$-space in an $\omega$-balanced topological group.

\begin{theorem}\label{Th6}
		
 A topological group $G$ is $\omega$-balanced and a $q$-space if and only if for each open neighborhood $O$ of the identity in $G$, there is a countably compact invariant subgroup $H$ which is of countable character in $G$, such that $H \subseteq O$ and the canonical quotient mapping $p:G\rightarrow G/H$ is quasi-perfect and the quotient group $G/H$ is metrizable.

\end{theorem}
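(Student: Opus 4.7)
The plan splits into the two directions, with necessity being the harder one.

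For sufficiency, suppose the internal characterization holds. Given any open neighborhood $U$ of $e$, apply the hypothesis with $O=U$ to obtain the invariant countably compact $H\subseteq U$ of countable character with $G/H$ metrizable and $p:G\to G/H$ quasi-perfect. A countable base $\{\bar W_n\}$ at $\bar e$ in $G/H$ lifts to $\{p^{-1}(\bar W_n)\}$, and this is a countable base at $H$ in $G$: closedness of $p$ makes $p(G\setminus V)$ closed for every open $V\supseteq H$ and it misses $\bar e$, so some $\bar W_n$ is disjoint from $p(G\setminus V)$, whence $p^{-1}(\bar W_n)\subseteq V$. For $\omega$-balancedness, invariance of $H$ gives $H\subseteq x^{-1}Ux$ for each $x\in G$, so some $p^{-1}(\bar W_n)\subseteq x^{-1}Ux$, i.e.\ $xp^{-1}(\bar W_n)x^{-1}\subseteq U$; the countable family $\{p^{-1}(\bar W_n)\}$ is the required witness. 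For the $q$-space property, by homogeneity it suffices to check $e$; after refining the base so that $\overline{V_{n+1}}\subseteq V_n$ (use regularity of the metrizable $G/H$ together with $\overline{p^{-1}(A)}\subseteq p^{-1}(\overline A)$), Lemma~\ref{L2} applied with $C=H$ shows that $\{V_n\}$ is a $q$-sequence at $H\ni e$.

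For necessity, fix open $O\ni e$ and choose a decreasing symmetric $q$-sequence $\{U_n\}$ at $e$ with $U_0\subseteq O$ and $U_{n+1}^{2}\subseteq U_n$. I will construct inductively open symmetric neighborhoods $V_n$ of $e$ together with countable families $\gamma_n=\{W^{n}_{k}:k\in\omega\}$. Set $V_0=U_0$ and take $\gamma_0$ witnessing $\omega$-balancedness for $V_0$. Given $V_n$ and $\gamma_n$, pick $V_{n+1}\subseteq V_n\cap U_{n+1}$ open and symmetric with $V_{n+1}^{2}\subseteq V_n$, $\overline{V_{n+1}}\subseteq V_n$, and $V_{n+1}\subseteq\bigcap_{i+j\le n}W^{i}_{j}$; then take $\gamma_{n+1}$ witnessing $\omega$-balancedness for $V_{n+1}$. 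Put $H:=\bigcap_n V_n$. For every $x\in G$ and every $n$, some $W^{n}_{k}\in\gamma_n$ satisfies $xW^{n}_{k}x^{-1}\subseteq V_n$; for $m\ge n+k+1$ the diagonal containment gives $V_m\subseteq W^{n}_{k}$, hence $xV_mx^{-1}\subseteq V_n$. Thus $xHx^{-1}\subseteq V_n$ for every $n$, so $xHx^{-1}\subseteq H$, and the symmetry and product conditions make $H$ a closed invariant subgroup inside $O$.

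Since $V_n\subseteq U_n$, the sequence $\{V_n\}$ is itself a $q$-sequence at $e$, and $\bigcap V_n=\bigcap\overline{V_n}=H$ thanks to $\overline{V_{n+1}}\subseteq V_n$. Lemma~\ref{L2} then gives that $H$ is countably compact and $\{V_n\}$ is a base at $H$, so $H$ has countable character in $G$. First-countability of $G/H$ (from the countable character of $H$) combined with neutrality of the invariant $H$ yields, by Lemma~\ref{L1}, the metrizability of $G/H$. For quasi-perfectness of $p$, each fiber $yH$ is a translate of $H$ and hence countably compact; closedness of $p$ follows because for a closed $F\subseteq G$ with $yH\cap F=\emptyset$ the closed set $y^{-1}F$ misses $H$, so some $V_n$ misses $y^{-1}F$, and $V_nH=V_n$ (by invariance of $H$) lets us conclude that $p(yV_n)$ is an open neighborhood of $yH$ in $G/H$ disjoint from $p(F)$. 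The principal obstacle is precisely the diagonal construction producing an invariant $H$: $\omega$-balancedness provides only countably many neighborhoods per target, and they must be woven into a single nested sequence whose intersection is simultaneously countably compact (via Lemma~\ref{L2}), of countable character in $G$, and closed under all inner automorphisms; once this is secured the remaining steps amount to routine topological-group bookkeeping.
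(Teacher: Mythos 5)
Your proposal is correct in both directions and, in the hard direction, takes a genuinely different route from the paper. For the implication from the internal characterization to ``$\omega$-balanced and a $q$-space'', the paper quotes Theorem~\ref{Th1} for the $q$-space part and verifies range-metrizability (via $V=(G/H)\setminus p(G\setminus O)$) to get $\omega$-balancedness; you argue both directly from the lifted countable base $\{p^{-1}(\bar W_n)\}$ at $H$, which is self-contained and avoids the range-metrizable detour. For the converse, the paper builds an increasing chain of countable families $\gamma_0\subseteq\gamma_1\subseteq\cdots$ of symmetric neighbourhoods closed under the square-root and conjugation operations, sets $H=\bigcap\gamma$, and then must prove separately (its step (c), using the countable compactness of $C=\bigcap_n U_n$ and that $\{U_n\}$ is a base at $C$) that the non-nested family $\gamma$ is a base at $H$. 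You instead thread a single decreasing symmetric sequence $\{V_n\}$ diagonally through all the $\omega$-balancedness witnesses; your invariance argument ($xV_mx^{-1}\subseteq xW^n_kx^{-1}\subseteq V_n$ for $m\ge n+k+1$) is sound, and since $V_n\subseteq U_n$ the sequence $\{V_n\}$ is itself a $q$-sequence with $\bigcap_n V_n=\bigcap_n\overline{V_n}=H$, so Lemma~\ref{L2} delivers countable compactness of $H$ and the base property in one stroke. Your observation that an invariant subgroup is automatically neutral (take $V=U$, so $HV=HU=UH$) also shortcuts the paper's neutrality step. On the whole your construction is cleaner than the paper's.

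One local error: in the proof that $p$ is closed you assert $V_nH=V_n$ ``by invariance of $H$''. Normality of $H$ only gives $V_nH=HV_n$; an open set containing $H$ need not be saturated, so this step as written is unjustified. The conclusion follows at once from data you already have: if $V_n\cap y^{-1}F=\emptyset$, then any point of $yV_{n+1}\cap FH$ would give $f=yvh^{-1}\in yV_{n+1}V_{n+1}^{-1}\subseteq yV_n$ (using $H\subseteq V_{n+1}$ and symmetry), contradicting $V_n\cap y^{-1}F=\emptyset$; hence $yV_{n+1}$ is an open neighbourhood of $y$ missing $FH$, and $FH$ is closed. This is exactly the paper's $V^2$ argument, so replace the saturation claim by this two-line computation.
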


\begin{proof}
 Necessity. Suppose that for each open neighborhood $O$ of the identity in $G$, there is a countably compact invariant subgroup $H$ which is of countable character in $G$, such that $H \subseteq O$ and the canonical quotient mapping $p:G\rightarrow G/H$ is quasi-perfect and the quotient group $G/H$ is metrizable. Then, by Theorem \ref{Th1}, $G$ is a $q$-space. Put $V=(G/H)\setminus p(G\setminus O)$. Then one can easily show that $V$ is an open neighbourhood of the identity in $G/H$ such that
 \begin{flalign*}
 \begin{split}
 p^{-1}(V)&=p^{-1}(G/H)\setminus p^{-1}(p(G\setminus O))\\
 &\subseteq G\setminus (G\setminus O)\\
 &=O\\
 \end{split}
 \end{flalign*}
 This implies that $G$ is range-metrizable. Thus from the fact that a topological group is $\omega$-balanced if it is range-metrizable \cite[Theorem 3.4.22]{AT} it follows that $G$ is $\omega$-balanced.

 Sufficiency. (1) Let $e$ be the  identity of $G$. Since topological group $G$ is regular and a $q$-space, there is a $q$-sequence $\{U_n:n\in\omega\}$ at $e$  such that $\bigcap_{n\in\omega}U_n=\bigcap_{n\in \omega}\overline{U_n}$. Let $\{U_n:n\in\omega\}$ be a decreasing sequence. Put $C=\bigcap_{n\in \omega}U_n$. Then by Lemma \ref{L2}, $\{U_n:n\in \omega\}$ is a base at $C$ in $G$ and $C$ is countably compact. Take any open neighbourhood $O$ of $e$. Then one can find an open symmetric neighbourhood $W$ at $e$ such that $ W \subseteq U_{0} \cap O $. Let $ \gamma_0=\{W\} $. Suppose that for some $ n\in \omega\ $ we have defined families $\gamma_0,\gamma_1,\dots,\gamma_k$ of open symmetric neighbourhoods at $e$ satisfying the following conditions for each
 $1\leqslant k\leqslant n$:
 \begin{enumerate}
 	
 \item[(i)] $|\gamma_{k-1}|\leqslant \omega $ and $ W \subseteq U_{k-1} \cap O $ for each $W\in \gamma_{k-1} $;

 \item[(ii)] $\gamma_{k-1}\subseteq \gamma_k$;

 \item[(iii)] for every $ W\in\gamma_{k-1}$, there exists $ V\in\gamma_k$ such that $V^2\subseteq W$;

 \item[(iv)] for every $W\in\gamma_{k-1} $ and $ x\in G $, there exists $ V\in \gamma_k$ such that  $ xVx^{-1}\subseteq W $.

 \end{enumerate}

 Since $\gamma_n$ is countable and $G$ is a topological group, we can find a countable family $\lambda_{n,1}$ of open symmetric neighbourhoods at $e$ such that every element of $\gamma_n$ contains the square of some element $V\in \lambda_{n,1}$. In addition, the group $G$ is $\omega$-balanced, so there exists a countable family $\lambda_{n,2}$ of open symmetric neighbourhoods at $e$ such that for each $ W\in \gamma_n$ and each $ x\in G$ there is $ V\in \lambda_{n,2}$ satisfying  $xVx^{-1}\subseteq W$. We can also assume that each element $V\in \lambda_{n,1}\cup \lambda_{n,2}$ is contained in $U_{n+1}\cap O$. Let the family $\gamma_{n+1}=\gamma_n \cup \lambda_{n,1} \cup \lambda_{n,2}$. Then the families $\gamma_0,\gamma_1,\dots,\gamma_{n+1}$ of symmetric open neighbourhoods at $e$ satisfying (i)-(iv).

 It is easy to see that the family $\gamma=\bigcup_{n\in \omega}\gamma_n$ is countable. Put $H=\bigcap \gamma$. Then we claim that the following $(\ast)$ formula is valid:

 $(\ast)$ $H$ is a countably compact invariant subgroup of countable character in $G$ such that $H\subseteq O$.

 (a) Firstly, we shall show that $ H $ is an invariant subgroup. Since every element in $\gamma$ is symmetric, we have $ H^{-1}=H $. Let us verify that $ HH\subseteq H $. Take any points $ a,b\in H$. By (iii), then for each $ W\in \gamma $ there is $ V\in \gamma $ such that $ V^2\subseteq W$. Thus $ab\in   V^2\subseteq W$ for each $W\in \gamma$. Then $ ab\in {\bigcap_{W\in \gamma}}W=H $. This implies that $ HH\subseteq H $. Thus $ H $ is a subgroup. Next we show that $H$ is invariant. In fact, take any  $ x\in G $. By (iv), then for each $ W\in \gamma $ there is $ V\in \gamma $ such that $ xVx^{-1}\subseteq W $. This implies $ xHx^{-1}\subseteq xVx^{-1}\subseteq W $ for every $W\in \gamma$. Thus $ xHx^{-1}\subseteq {\bigcap_{W\in \gamma}}W =H $, which implies that $ H $ is invariant.

 (b) Secondly, we shall show that $H$ is countably compact. By (i), $H=\bigcap \gamma \subseteq {\bigcap_{n\in \omega} U_{n}}=C $ is obvious. Since C is countably compact, it is enough to show that $H$ is closed. Take any $ y\notin H=\bigcap \gamma $. Then there is $ V\in \gamma_{n_{0}} $ such that $ y\notin V $. By (iii), there is $ U\in \gamma_{n_{0}+1}$ such that $U^2\subseteq V$. Thus $ y\notin U^2 $. Since every element in $\gamma$ is symmetric and open, we have $ yU\cap U=\emptyset $. Thus $ yU\cap H=\emptyset $ because of $ H\subseteq U $. Since $ yU $ is an open neighbourhood of $ y $, $ H $ is closed.

 (c) Finally, to show that $H$ is of countable character in $G$ it is enough to show that $ \gamma $ is a base for $ G $ at $ H $. Let $ W $ be an open neighbourhood of $ H $ in $ G $. Since  $C=\bigcap_{n\in \omega}U_n=\bigcap_{n\in \omega}\overline{U_n}$, $ C $ is closed and countably compact. Put $ K=C\setminus W $. Then $ K $ is a countably compact subset and $ K\cap H =\emptyset $. Since every element in $ \gamma $ is open and symmetric, one can easily show that $ H=\bigcap_{V\in \gamma}\overline{V} $ by (iii). We claim that $ \overline{V}\cap K=\emptyset $ for some $V\in \gamma_{i} $. In fact, if $ \overline{V}\cap K\neq\emptyset$ for each  $V\in \gamma$, then by the countable compactness of $K$, the set $ K\cap(\bigcap_{V\in \gamma} \overline{V})=K\cap H\neq \emptyset$, which is a contradiction. By (iii), take $ U\in \gamma_{i+1} $ such that $ U^2\subseteq V$. It is clear that $ C\subseteq K\cup W \subseteq KU \cup W $. Since $ KU \cup W $ is an open neighbourhood of $C$ and sequence \{$ U_{n}:n\in \omega $\} is a base for $ G $ at $ C $, there is $U_{n_{0}}\in \{ U_{n}:n\in \omega \} $ such that $ C\subseteq  U_{n_{0}} \subseteq  KU \cup W $. Put $l=\text{max}\{i+1,n_{0}\} $. Then there is $U_{1}\in \gamma_{l+1} $ such that $ U_{1}\subseteq U $ and $ U_{1}\subseteq U_{n_{0}} $. Since $ U^2\cap K=\emptyset $, it implies that $ U\cap KU=\emptyset $. Thus $ H\subseteq U_{1} \subseteq U_{n_{0}} \subseteq W $ by $ U_{1}\cap KU=\emptyset $. We prove that $ \gamma  $ is a base for $ G $ at $ H $. By (i), $ H\subseteq O $ is obvious, so we complete the proof of $(\ast)$ formula.

 (2) Next we shall show that the canonical quotient mapping $p:G\rightarrow G/H$ is a quasi-perfect mapping.  Since $ H $ is countably compact, $p^{-1}(p(y))=yH$ is also a countably compact subset of $ G $ for every $y\in G$. Therefore, we only to show that $ p $ is a closed mapping. Take a closed set $F$ in $ G $. To show that $p(F)$ is closed in $G/H$ it is enough to show that $FH$ is closed in $G$, because $p$ is a quotient mapping and $FH=p^{-1}(p(F))$. Take any $x\notin FH$. Then $F^{-1}x$ is a closed set in $G$ such that $F^{-1}x\cap H=\emptyset$. This implies that $ H\subseteq G\setminus F^{-1}x $ and $ G\setminus F^{-1}x $ is an open set in $ G $. Since $\gamma$ is a base at $H$ in $G$, there is a $V\in \gamma$ such that $ H\subseteq V^2\subseteq G\setminus F^{-1}x $. Thus  $F^{-1}x\cap V^2=\emptyset$. Since every element in $ \gamma$ is open and symmetric, we have $xV\cap FV=\emptyset$. This implies that $xV\cap FH=\emptyset$, because $H\subseteq V$. Thus we find an open neighborhood $xV$ of $x$ which is disjoint with $FH$. This implies that $FH$ is closed in $G$.

 Finally, we shall show that $G/H$ is metrizable. To show that $G/H$ is metrizable it is only to show that $H$ is a neutral subgroup in $G$ and $G/H$ is first-countable by Lemma \ref{L1}. Since $H$ is of countable character in $G$ and $p$ is an open and closed mapping, one can easily show that $G/H$ is first-countable. In fact, the family $\{p(V):V\in \gamma\}$ is a base at $p(e)$. Take any open neighbourhood $ W $ of $p(e)$. Then $ H=p^{-1}(p(e))\subseteq p^{-1}(W) $. Since $W$ is open in $ G/H $ and $p$ is a continuous mapping, then $ p^{-1}(W) $ is open in $ G $. Since $\gamma$ is a base at $ H $ in $ G $, there is a $ V\in \gamma $ such that $ H\subseteq V\subseteq p^{-1}(W) $. This implies that $ p(V)\subseteq p(p^{-1}(W))=W $. Thus the family $\{p(V):V\in \gamma\}$ is a base at $p(e)$. From the fact that $G/H$ is a homogeneous space \cite[Theorem 1.5.1]{AT} it follows that $G/H$ is first-countable. Now we shall prove that $H$ is a neutral subgroup in $G$. Take an open neighborhood $U$ of the identity in $G$. Then $HU$ is an open neighborhood of $H$. Since $\gamma$ is a base at $H$ in $G$, there is $V\in \gamma$ such that $V^2\subseteq HU$. This implies that $VH\subseteq HU$. Thus we have proved that $H$ is a neutral subgroup.

\end{proof}

  We recall that a  topological group $ G $ is called {\it $\omega $-narrow}\cite{AT} if for every open neighbourhood $V$ of the neutral element in $ G $, there exists a countable subset $ A $ of $ G $ such that $ AV=G $.

 \begin{theorem}\cite[Theorem 3.4.23]{AT}\label{Th7}
 A topological group $G$ is topologically isomorphic to a subgroup of the topological product of some family of second-countable groups if and only if $ G $ is $ \omega $-narrow.
 \end{theorem}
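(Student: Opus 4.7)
The plan is to prove the two directions separately, reducing each to standard preservation and construction facts for $\omega$-narrow topological groups.

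For necessity, suppose $G$ embeds topologically into a product $\prod_{\alpha\in A}H_\alpha$ of second-countable groups. First I would note that every second-countable topological group is $\omega$-narrow: it is Lindel\"of, so the cover of $H_\alpha$ by left translates $\{xV:x\in H_\alpha\}$ admits a countable subcover for every open $V\ni e$. Then I would verify two preservation properties. (i) A subgroup of an $\omega$-narrow group is $\omega$-narrow, immediate from the definition by using that a neighborhood of the identity in the subgroup is the trace of a neighborhood of the ambient identity. (ii) An arbitrary product of $\omega$-narrow groups is $\omega$-narrow, which reduces to the finite case because each basic open neighborhood of the identity involves only finitely many coordinates; the finite case is handled by taking the Cartesian product of the countable covering subsets produced by $\omega$-narrowness of the factors. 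Combining (i) and (ii) with the embedding hypothesis yields that $G$ is $\omega$-narrow.

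For sufficiency, assume $G$ is $\omega$-narrow. The strategy is to produce, for each open neighborhood $U$ of the identity $e$, a continuous surjective homomorphism $\pi_U:G\to L_U$ onto a second-countable topological group with $\pi_U^{-1}(W_U)\subseteq U$ for some open $W_U$ around the identity of $L_U$. Then the diagonal map $\Phi:G\to\prod_U L_U$, $\Phi(x)=(\pi_U(x))_U$, is a continuous injective homomorphism (injectivity because for $x\ne e$ one can find $U\not\ni x$), and the preimages $\pi_U^{-1}(W_U)$ form a base at $e$ in $G$, so $\Phi$ is a topological embedding onto a subgroup of the product. The construction of $\pi_U$ mirrors step~(1) in the proof of Theorem~\ref{Th6}: using that $\omega$-narrow implies $\omega$-balanced, I would build a countable family $\gamma$ of open symmetric neighborhoods of $e$, contained in $U$ and closed under taking square roots and under the $\omega$-balanced conjugation condition; the intersection $N=\bigcap\gamma$ is then a closed invariant subgroup of $G$ with $\gamma$ a base at $N$. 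The neutrality of $N$ and the first-countability of $G/N$ (via the images $\{p(V):V\in\gamma\}$) follow exactly as in the proof of Theorem~\ref{Th6}, so by Lemma~\ref{L1} the quotient $G/N$ is metrizable; I then set $\pi_U=p$ and $L_U=G/N$.

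The main obstacle is upgrading \emph{metrizable} to \emph{second-countable} for $L_U$. For this I would use that $\omega$-narrowness passes to continuous homomorphic images: if $AV=G$ and $f:G\to G'$ is a continuous surjective homomorphism, then for any open $V'\ni e_{G'}$ one sets $V=f^{-1}(V')$ and obtains $f(A)V'=G'$, showing $G/N$ is $\omega$-narrow. Then I would invoke the standard lemma that a first-countable $\omega$-narrow topological group is second-countable: given a countable base $\{U_n\}$ at the identity of $G/N$ and countable sets $D_n\subseteq G/N$ with $D_nU_n=G/N$, the family $\{dU_n:d\in\bigcup_n D_n,\ n\in\omega\}$ is a countable base for $G/N$. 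Granted these standard facts, the proof is complete; the essential technical content is the construction of a countable family $\gamma$ cutting $G$ down to a second-countable quotient inside every prescribed neighborhood of $e$.
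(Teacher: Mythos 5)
The paper does not actually prove this statement: it is quoted as Guran's embedding theorem, \cite[Theorem 3.4.23]{AT}, so there is no in-paper argument to compare yours against. Your blind reconstruction is, in outline, the standard textbook proof and it is correct: for one direction, second-countable implies Lindel\"of implies $\omega$-narrow for each factor, plus preservation of $\omega$-narrowness under arbitrary products and under subgroups; for the other, the diagonal product of canonical quotient homomorphisms onto metrizable $\omega$-narrow (hence second-countable) groups $G/N_U$ with $N_U\subseteq U$, which is exactly the mechanism the paper itself re-uses in the necessity halves of Corollaries \ref{C1}--\ref{C3}, and your auxiliary facts are Propositions \ref{P1}, \ref{P2} and the $\gamma$-construction of Theorem \ref{Th6}.

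The one step you gloss too quickly is heredity of $\omega$-narrowness to subgroups: it is not ``immediate from the definition by taking traces of neighborhoods.'' If $B\subseteq G$ is countable with $BV=G$, nothing forces $B$ to meet the subgroup $H$, so $\{bV\cap H: b\in B\}$ does not directly yield a countable subset \emph{of $H$} whose translates of $V\cap H$ cover $H$. The standard repair: given open $V\ni e$, choose an open symmetric $W$ with $W^{-1}W\subseteq V$, take countable $B$ with $BW=G$, and for each $b$ with $bW\cap H\neq\emptyset$ pick $h_b\in bW\cap H$; then for $x\in H$ lying in $bW$ one has $h_b^{-1}x\in W^{-1}W\cap H\subseteq V\cap H$, so the countable set $\{h_b\}\subseteq H$ witnesses $\omega$-narrowness of $H$. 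With that lemma supplied (it is \cite[Theorem 3.4.4]{AT}), your proof is complete.
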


 \begin{proposition}\cite[Proposition 3.4.2]{AT}\label{P1}
 If a topological group $H$ is a continuous homomorphic image of an $\omega$-narrow topological group $ G $, then $ H $ is also $\omega$-narrow.
 \end{proposition}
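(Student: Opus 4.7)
The plan is to verify $\omega$-narrowness of $H$ directly from the definition by pulling neighborhoods back to $G$. Let $f\colon G\rightarrow H$ be the continuous surjective homomorphism witnessing that $H$ is a continuous homomorphic image of $G$, and let $e_G$ and $e_H$ denote the respective identities. Take an arbitrary open neighborhood $U$ of $e_H$ in $H$. The goal is to construct a countable set $B\subseteq H$ with $BU=H$.

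First I would use continuity of $f$ to set $V=f^{-1}(U)$, which is an open neighborhood of $e_G$ in $G$ since $f$ is continuous and $f(e_G)=e_H\in U$. Then, applying the hypothesis that $G$ is $\omega$-narrow, I would produce a countable subset $A\subseteq G$ with $AV=G$. Defining $B=f(A)$ gives a countable subset of $H$, and the claim is $BU=H$.

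The verification of $BU=H$ is the substantive step, though it is short. Given any $h\in H$, surjectivity of $f$ yields some $g\in G$ with $f(g)=h$. Writing $g=av$ with $a\in A$ and $v\in V$, applying the homomorphism property gives $h=f(a)f(v)$ with $f(a)\in B$. The crucial observation is that $f(V)=f(f^{-1}(U))=U$ because $f$ is onto, so $f(v)\in U$ and hence $h\in BU$. Since $U$ was arbitrary, this shows $H$ is $\omega$-narrow.

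There is no real obstacle here; the argument is essentially the standard one used to show that $\omega$-narrowness is preserved under continuous homomorphisms, and it parallels the corresponding result for $\sigma$-compactness or Lindelöfness. The only subtlety worth flagging is the appeal to surjectivity in the equality $f(f^{-1}(U))=U$, without which one would only get $f(v)\in f(V)$ and lose control of the covering.
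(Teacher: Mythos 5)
Your proof is correct and is the standard argument for this result, which the paper itself only quotes from Arhangel'skii--Tkachenko without reproving: pull $U$ back to $V=f^{-1}(U)$, take a countable $A$ with $AV=G$, and push forward to $B=f(A)$. One minor inaccuracy in your closing remark: the step $f(v)\in U$ needs only the inclusion $f(f^{-1}(U))\subseteq U$, which holds for any map whatsoever; surjectivity of $f$ is used only to lift an arbitrary $h\in H$ to some $g\in G$, not for that containment.
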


 \begin{proposition}\cite[Proposition 3.4.5]{AT}\label{P2}
 Every first-countable $\omega$-narrow topological group has a countable base.
 \end{proposition}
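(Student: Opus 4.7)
The plan is to combine a countable local base at the identity (from first-countability) with countably many coverings by left translates (from $\omega$-narrowness) to manufacture a countable base for the whole topology of $G$.

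Concretely, I would fix a decreasing countable base $\{U_n : n \in \omega\}$ of symmetric open neighborhoods of the identity $e$, arranged so that $U_{n+1} U_{n+1} \subseteq U_n$ for every $n$; such a cushioning refinement exists by continuity of multiplication and inversion. Then, for each $n \in \omega$, $\omega$-narrowness supplies a countable set $A_n \subseteq G$ with $A_n U_n = G$. Setting $\mathcal{B} = \{a U_n : n \in \omega,\ a \in A_n\}$ produces a countable family of open subsets of $G$, and the remaining task is to verify that $\mathcal{B}$ is in fact a base.

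For this, given $x \in G$ and an arbitrary open neighborhood $O$ of $x$, the open set $x^{-1}O$ contains some $U_k$, and by the cushioning property one can choose $m$ with $U_m U_m \subseteq U_k$. By the defining property of $A_m$ there is $a \in A_m$ with $x \in a U_m$; writing $a = x u^{-1}$ for some $u \in U_m$ and using symmetry, one computes $a U_m \subseteq x U_m^{-1} U_m = x U_m U_m \subseteq x U_k \subseteq O$, so $x \in a U_m \subseteq O$ as required.

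The only genuinely delicate step is the cushioning: without passing from $U_k$ to a smaller $U_m$ whose square still fits inside $U_k$, one cannot guarantee that a translate $a U_m$ which happens to contain $x$ is actually wedged inside $O$. Everything else is a routine manipulation with symmetric neighborhoods in a topological group, and the countability of $\mathcal{B}$ is immediate from the countability of $\omega$ and of each $A_n$.
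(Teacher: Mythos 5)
Your proof is correct and is the standard argument for this fact; the paper itself offers no proof, simply importing the statement from Arhangel'skii--Tkachenko (Proposition 3.4.5), where essentially the same translate-and-cushion construction is used. The key points --- refining the countable local base so that $U_{m}U_{m}\subseteq U_{k}$, and then wedging $aU_m\subseteq xU_mU_m\subseteq O$ via symmetry --- are exactly right, so nothing is missing.
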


 By Theorem \ref{Th6}, we have the following result.

 \begin{corollary}\label{C1}
  A topological group $G$ is $\omega$-narrow and a $q$-space if and only if for each open neighborhood $O$ of the identity in $G$, there is a countably compact invariant subgroup $H$ which is of countable character in $G$, such that
  $H \subseteq O$ and the canonical quotient mapping $p:G\rightarrow G/H$ is quasi-perfect and the quotient group $G/H$ is separable and metrizable.

 \end{corollary}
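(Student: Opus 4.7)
The plan is to derive this corollary from Theorem \ref{Th6} by upgrading both directions with the extra separability information supplied by $\omega$-narrowness. The only ingredients beyond Theorem \ref{Th6} are Propositions \ref{P1} and \ref{P2}, Theorem \ref{Th7}, and the (standard) fact that every $\omega$-narrow topological group is $\omega$-balanced.

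For the necessity, I would first record that $\omega$-narrowness implies $\omega$-balancedness: by Theorem \ref{Th7}, $G$ embeds into a product of second-countable, hence metrizable, topological groups; a metrizable group is trivially $\omega$-balanced (a countable base at the identity serves as the required family $\gamma$), and $\omega$-balancedness passes to subgroups and to arbitrary products. Theorem \ref{Th6} then applies and produces, for each open neighborhood $O$ of the identity, a countably compact invariant subgroup $H \subseteq O$ of countable character with $p\colon G\to G/H$ quasi-perfect and $G/H$ metrizable. It remains only to sharpen ``metrizable'' to ``separable and metrizable''. Since $G/H$ is a continuous homomorphic image of the $\omega$-narrow group $G$, Proposition \ref{P1} gives that $G/H$ is $\omega$-narrow; being metrizable it is first-countable, so by Proposition \ref{P2} it has a countable base, and a metrizable space with a countable base is separable.

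For the sufficiency, the hypothesis contains that of Theorem \ref{Th6}, so $G$ is automatically $\omega$-balanced and a $q$-space, and only $\omega$-narrowness needs to be proved. Given an arbitrary open neighborhood $V$ of the identity, I would pick an open symmetric $W$ with $W^2 \subseteq V$ and apply the hypothesis to $W$, obtaining an invariant countably compact subgroup $H \subseteq W$ with $G/H$ separable and metrizable, hence second-countable, hence $\omega$-narrow. Using that $p$ is an open continuous homomorphism, $p(W)$ is an open neighborhood of $p(e)$ in $G/H$; by $\omega$-narrowness of $G/H$ there is a countable set $\{y_n:n\in\omega\}\subseteq G/H$ with $G/H = \bigcup_n y_n\,p(W)$. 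Lifting each $y_n$ to a point $x_n\in p^{-1}(y_n)$ and using normality of $H$, one gets $G = \bigcup_n x_n WH$; since $H\subseteq W$, $WH \subseteq W^2 \subseteq V$, and therefore $G = \bigcup_n x_n V$, establishing $\omega$-narrowness.

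The only obstacle worth flagging is bookkeeping: one must take care that the covering produced from $\omega$-narrowness of $G/H$ is pulled back to translates of $V$ itself rather than of some larger neighborhood, and this is handled by the preliminary step of shrinking $V$ to $W$ with $W^2\subseteq V$ before invoking the hypothesis. Aside from this, the argument is a routine synthesis of Theorem \ref{Th6} with the standard permanence properties of $\omega$-narrowness recalled above.
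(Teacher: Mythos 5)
Your proposal is correct, and one half of it diverges from the paper in an interesting way. The direction ``$G$ $\omega$-narrow and a $q$-space $\Rightarrow$ condition'' is essentially identical to the paper's: reduce to Theorem \ref{Th6} via ``$\omega$-narrow implies $\omega$-balanced'', then upgrade $G/H$ from metrizable to separable metrizable using Propositions \ref{P1} and \ref{P2}. In the converse direction, however, you and the paper prove $\omega$-narrowness of $G$ by genuinely different routes. The paper reuses the range-metrizability observation from the necessity part of Theorem \ref{Th6} to form the diagonal product of the quotient maps $p_O$, embeds $G$ into a product of second-countable groups, and invokes the Guran-type characterization (Theorem \ref{Th7}). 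You instead verify the definition of $\omega$-narrowness directly: shrink $V$ to a symmetric $W$ with $W^2\subseteq V$, obtain $H\subseteq W$ with $G/H$ second-countable hence $\omega$-narrow, cover $G/H$ by countably many translates $y_n p(W)$, and lift to $G=\bigcup_n x_n WH\subseteq\bigcup_n x_n W^2\subseteq\bigcup_n x_n V$; the preliminary shrinking of $V$ to $W$ is exactly the right bookkeeping to make the lifted cover consist of translates of $V$ itself. Your argument is more elementary and self-contained (it needs only that separable metrizable groups are $\omega$-narrow and that $p$ is an open homomorphism with $p^{-1}(p(W))=WH$), at the cost of a small amount of explicit computation; the paper's argument is shorter on the page but leans on the embedding theorem and on the diagonal product being a topological isomorphism, which it asserts rather than checks. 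Both are sound.
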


\begin{proof}
 Necessity. Suppose that for each open neighborhood $O$ of the identity in $G$, there is a countably compact invariant subgroup $H$ which is of countable character in $G$, such that $H \subseteq O$ and the canonical quotient mapping $p:G\rightarrow G/H$ is quasi-perfect and the quotient group $G/H$ is separable and metrizable.  Then, by Theorem \ref{Th1}, $G$ is a $q$-space. Thus we only to show that topological group $G$ is $\omega$-narrow. For each open neighborhood $O$ of the identity in $G$, there is a countably compact invariant subgroup $H_{O}$, such that $H_{O} \subseteq O$ and the canonical quotient mapping $p_{O}:G\rightarrow G/H_{O}$ is quasi-perfect and every quotient group $G/H_{O}$ is separable and metrizable. Thus every topological group $G/H_{O}$ is second-countable. From the proof process of the necessity of Theorem \ref{Th6}, it can be concluded that for each open neighborhood $O$ of the identity in $G$, there is an open neighbourhood $V_{O}$ of the identity in $G/H_{O}$ such that $ p_{O}^{-1}(V_{O})\subseteq O $. Let $\mathcal{B}$ be the family of all open neighbourhoods of the neutral element in $G$. Then the diagonal product $h$ of the family  \{$p_{O}$:$O\in \mathcal{B}$ \} is a topological isomorphism of $G$ onto a topological subgroup of the topological product of the family \{$G/H_{O}$:$O\in \mathcal{B}$\}.
 Thus by Theorem \ref{Th7}, $G$ is $\omega $-narrow.

 Sufficiency. It is well known that every $\omega$-narrow topological group is $\omega$-balanced. Thus topological group $G$ is $\omega$-balanced and a $q$-space. Then, by Theorem \ref{Th6}, for each open neighborhood $O$ of the identity in $G$, there is a countably compact invariant subgroup $H$ which is of countable character in $G$, such that $H \subseteq O$ and the canonical quotient mapping $p:G\rightarrow G/H$ is quasi-perfect and the quotient group $G/H$ is metrizable. Thus we only to show that $G/H$ is separable. Since topological group $G/H$ is a continuous homomorphic image of an $\omega$-narrow topological group $G$, by Proposition \ref{P1}, $G/H$ is $\omega$-narrow. $G/H$ is first-countable because $G/H$ is metrizable. Then $G/H$ is a first-countable $\omega$-narrow topological group. Thus $G/H$ has a countable base by Proposition \ref{P2}. This implies that $G/H$ is second-countable. Since every second-countable topological group is separable, we have $G/H$ is separable.
\end{proof}

Now we give an "internal" characterization of strict $q$-space in an $\omega$-balanced topological group.

\begin{theorem}\label{Th8}
A topological group $G$ is $\omega$-balanced and a strict $q$-space if and only if for each open neighborhood $O$ of the identity in $G$, there is a closed sequentially compact invariant subgroup $H$ which is of countable character in $G$, such that $H \subseteq O$ and the canonical quotient mapping $p:G\rightarrow G/H$ is sequential-perfect and the quotient group $G/H$ is metrizable.

\end{theorem}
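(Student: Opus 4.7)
The plan is to run the argument of Theorem \ref{Th6} essentially verbatim, with four substitutions: ``$q$-sequence'' becomes ``strict $q$-sequence'', ``countably compact'' becomes ``sequentially compact'', ``quasi-perfect'' becomes ``sequential-perfect'', and the invocation of Theorem \ref{Th1} becomes an invocation of Theorem \ref{Th2}. The reason this strategy should succeed is that for Hausdorff spaces sequential compactness implies countable compactness (any infinite subset generates a sequence whose subsequential limit is an accumulation point), while any closed subset of the sequentially compact set $C=\bigcap_n U_n$ will itself be sequentially compact. These two facts together should let every step that previously used countable compactness survive, while also upgrading the final conclusion on $H$ from countable to sequential compactness.

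For necessity I would first apply Theorem \ref{Th2} to conclude that $G$ is a strict $q$-space, and then copy the range-metrizability argument of Theorem \ref{Th6}: the set $V=(G/H)\setminus p(G\setminus O)$ is an open neighborhood of the identity in the metrizable group $G/H$ with $p^{-1}(V)\subseteq O$, so $G$ is range-metrizable, and hence $\omega$-balanced by \cite[Theorem 3.4.22]{AT}.

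For sufficiency, fix a strict $q$-sequence $\{U_n:n\in\omega\}$ at the identity $e$ with $\bigcap_n U_n=\bigcap_n\overline{U_n}$, arrange it to be decreasing, and set $C=\bigcap_n U_n$. By the definition of a strict $q$-sequence $C$ is sequentially compact; since sequential compactness implies countable compactness, Lemma \ref{L2} still applies and $\{U_n\}$ is a base at $C$. Given any neighborhood $O$ of $e$, I would build a countable increasing chain $\gamma_0\subseteq\gamma_1\subseteq\cdots$ of countable families of open symmetric neighborhoods of $e$ contained in $U_k\cap O$ and satisfying conditions (i)--(iv) exactly as before, condition (iv) being available thanks to the $\omega$-balancedness hypothesis. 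Then set $\gamma=\bigcup_n\gamma_n$ and $H=\bigcap\gamma$. The verifications that $H$ is a closed invariant subgroup with $H\subseteq C\cap O$ transfer unchanged, and since $H$ is closed in the sequentially compact set $C$, $H$ is sequentially compact. The proof that $\gamma$ is a base at $H$ is also the same: for open $W\supseteq H$, the set $K=C\setminus W$ is sequentially compact, hence countably compact, which is exactly what the finite-intersection argument needs.

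It remains to verify sequential-perfectness of $p:G\to G/H$ and metrizability of $G/H$. Each fiber $yH$ is sequentially compact, and closedness of $p$ is obtained precisely as in Theorem \ref{Th6} (for $x\notin FH$, pick $V\in\gamma$ with $V^2\subseteq G\setminus F^{-1}x$, then $xV\cap FH=\emptyset$), so $p$ is sequential-perfect. Metrizability of $G/H$ follows from Lemma \ref{L1} once one checks, as before, that $\{p(V):V\in\gamma\}$ is a countable base at $p(e)$ and that $H$ is neutral in $G$ (for any open neighborhood $U$ of $e$, choose $V\in\gamma$ with $V^2\subseteq HU$ to obtain $VH\subseteq HU$). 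The only point that requires any thought beyond the bookkeeping is confirming that sequential rather than merely countable compactness of $H$ is delivered by the construction, and this is precisely the observation that $H$ is closed in the sequentially compact set $C$; I do not anticipate any other obstacle.
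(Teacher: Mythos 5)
Your proposal is correct and follows essentially the same route as the paper: reduce to the construction of Theorem \ref{Th6}, observe that $C=\bigcap_n U_n$ is now sequentially compact by the definition of a strict $q$-sequence, and conclude that the closed subgroup $H\subseteq C$ is sequentially compact, with the closedness of $p$ and the metrizability of $G/H$ carried over unchanged. If anything, your explicit remark that sequential compactness implies countable compactness (so Lemma \ref{L2} still applies) is slightly more careful than the paper's own wording.
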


\begin{proof}
 Necessity. Suppose that for each open neighborhood $O$ of the identity in $G$, there is a closed sequentially compact invariant subgroup $H$ which is of countable character in $G$, such that $H \subseteq O$ and the canonical quotient mapping $p:G\rightarrow G/H$ is sequential-perfect and the quotient group $G/H$ is metrizable. Then by Theorem \ref{Th2}, $G$ is a strict $q$-space. Put $V=(G/H)\setminus p(G\setminus O)$. Then one can easily show that $V$ is an open neighbourhood of the identity in $G/H$ such that
\begin{flalign*}
\begin{split}
p^{-1}(V)&=p^{-1}(G/H)\setminus p^{-1}(p(G\setminus O))\\
&\subseteq G\setminus (G\setminus O)\\
&=O\\
\end{split}
\end{flalign*}
This implies that $G$ is range-metrizable. Thus from the fact that a topological group is $\omega$-balanced if it is range-metrizable \cite[Theorem 3.4.22]{AT} it follows that $G$ is $\omega$-balanced.

Sufficiency. (1) Let $e$ be the  identity of $G$. Since topological group $G$ is regular and a strict $q$-space, there is a strict $q$-sequence $\{U_n:n\in\omega\}$ at $ e $ such that $\bigcap_{n\in \omega}U_n=\bigcap_{n\in \omega}\overline{U_n}$. Let $\{U_n:n\in\omega\}$ be a decreasing sequence. Put $C=\bigcap_{n\in \omega}U_n$. Then by Lemma \ref{L2}, $\{U_n:n\in \omega\}$ is a base at $C$ in $G$ and $C$ is sequentially compact in $G$.  We continue to use the notation of Theorem \ref{Th6}. The following proof is completely similar to Theorem \ref{Th6}, and we can obtain that the inclusion of $H$ is a countably compact invariant subgroup of countable character in $G$ such that $H\subseteq O$ already holds. Next we only to prove that $H$ is a sequentially compact subgroup in $G$. Since $ H=\bigcap \gamma \subseteq {\bigcap_{n\in \omega} U_{n}}=C $ and $ C $ is sequentially compact in $G$ and $H$ is closed, then $H$ is a sequentially compact subgroup in $G$.

(2) Next we shall show that the canonical quotient mapping $p:G\rightarrow G/H$ is a sequential-perfect mapping. Since $ H $ is sequentially compact, $ p^{-1}(p(y))=yH $ is also a sequentially compact subset of $ G $ for every $ y\in G $. Therefore, we only to prove that $ p $ is a closed mapping. The proof of closed mapping is completely similar to Theorem \ref{Th6}, and it will not be repeated here!
Finally, we shall show that $G/H$ is metrizable. Following the proof of Theorem \ref{Th6}, one can easily show that G/H is metrizable.
\end{proof}

By Theorem \ref{Th8}, we have the following result.

\begin{corollary}\label{C2}
 A topological group $G$ is $\omega$-narrow and a strict $q$-space  if and only if for each open neighborhood $O$ of the identity in $G$, there is a closed sequentially compact invariant subgroup $H$ which is of countable character in $G$, such that $H \subseteq O$ and the canonical quotient mapping $p:G\rightarrow G/H$ is  sequential-perfect and the quotient group $G/H$ is separable and metrizable.
\end{corollary}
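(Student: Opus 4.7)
The plan is to model the proof on that of Corollary \ref{C1}, simply substituting Theorem \ref{Th8} for Theorem \ref{Th6} and ``sequentially compact / sequential-perfect'' for ``countably compact / quasi-perfect'' throughout. Both directions are essentially bookkeeping on top of the theorems and auxiliary propositions already established.

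For necessity, I would first invoke Theorem \ref{Th2} to conclude that $G$ is a strict $q$-space, since the hypothesis trivially implies the hypothesis of Theorem \ref{Th2} (separable metrizable is stronger than metrizable). It remains to prove $G$ is $\omega$-narrow. Given any open neighborhood $O$ of the identity, let $H_O$ be the provided subgroup and $p_O \colon G \to G/H_O$ the quotient homomorphism; since $G/H_O$ is separable and metrizable, it is second-countable. Exactly as in the proof of Corollary \ref{C1}, set $V_O = (G/H_O) \setminus p_O(G \setminus O)$, an open neighborhood of the identity of $G/H_O$ satisfying $p_O^{-1}(V_O) \subseteq O$. Letting $\mathcal{B}$ be the family of all open neighborhoods of $e$ in $G$, the diagonal product of $\{p_O : O \in \mathcal{B}\}$ is a topological isomorphism of $G$ onto a subgroup of the product of the second-countable groups $\{G/H_O : O \in \mathcal{B}\}$, so Theorem \ref{Th7} yields that $G$ is $\omega$-narrow.

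For sufficiency, the input is that $G$ is $\omega$-narrow and a strict $q$-space. Since every $\omega$-narrow topological group is $\omega$-balanced, Theorem \ref{Th8} furnishes, for each open neighborhood $O$ of $e$, a closed sequentially compact invariant subgroup $H \subseteq O$ of countable character such that $p \colon G \to G/H$ is sequential-perfect and $G/H$ is metrizable. The only remaining task is to upgrade ``metrizable'' to ``separable metrizable''. Because $G/H$ is a continuous homomorphic image of the $\omega$-narrow group $G$, Proposition \ref{P1} tells us that $G/H$ is itself $\omega$-narrow; being metrizable it is first-countable, so Proposition \ref{P2} gives it a countable base. A second-countable space is separable, completing the argument.

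I do not anticipate a genuine obstacle, as the proof is structurally identical to that of Corollary \ref{C1}; the only point requiring a moment of care is confirming that the $H$ produced by Theorem \ref{Th8} in the sufficiency direction is precisely the one that will satisfy the stronger conclusion after the separability upgrade, which it does because the properties of $H$ (sequential compactness, countable character, invariance, containment in $O$, sequential-perfectness of $p$) carry over unchanged and only the target group $G/H$ needs the extra separability verification.
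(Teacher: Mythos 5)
Your proposal is correct and follows essentially the same route as the paper: both directions invoke Theorem \ref{Th2}, the $V_O = (G/H_O)\setminus p_O(G\setminus O)$ construction with the diagonal product and Theorem \ref{Th7} for necessity, and Theorem \ref{Th8} plus Propositions \ref{P1} and \ref{P2} for the separability upgrade in sufficiency. No gaps.
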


\begin{proof}
 Necessity. Suppose that for each open neighborhood $O$ of the identity in $G$, there is a closed sequentially compact invariant subgroup $H$ which is of countable character in $G$, such that $H \subseteq O$ and the canonical quotient mapping $p:G\rightarrow G/H$ is  sequential-perfect and the quotient group $G/H$ is separable and metrizable. Then, by Theorem \ref{Th2}, $G$ is a strict $q$-space. Thus we only to show that topological group $G$ is $\omega$-narrow. For each open neighborhood $O$ of the identity in $G$, there is a closed sequentially compact invariant subgroup $H_{O}$, such that $H_{O} \subseteq O$ and every canonical quotient mapping $p_{O}:G\rightarrow G/H_{O}$ is sequential-perfect and  every quotient group $G/H_{O}$ is separable and metrizable. Thus every topological group $G/H_{O}$ is second-countable. From the proof process of the necessity of Theorem \ref{Th6}, it can be concluded that for each open neighborhood $O$ of the identity in $G$, there is an open neighbourhood $V_{O}$ of the identity in $G/H_{O}$ such that $ p_{O}^{-1}(V_{O})\subseteq O $. Let $\mathcal{B}$ be the family of all open neighbourhoods of the neutral element in $G$. Then the diagonal product h of the family  \{$p_{O}$:$O\in \mathcal{B}$\} is a topological isomorphism of $G$ onto a topological subgroup of the topological product of the family \{$G/H_{O}$:$O\in \mathcal{B}$\}.
 Thus by Theorem \ref{Th7}, $G$ is $\omega $-narrow.
	
 Sufficiency: It is well known that every $\omega$-narrow topological group is $\omega$-balanced. Thus topological group $G$ is $\omega$-balanced and a strict $q$-space. Then, by Theorem \ref{Th8}, for each open neighborhood $O$ of the identity in $G$, there is a closed sequentially compact invariant subgroup $H$ which is of countable character in $G$, such that $H \subseteq O$ and the canonical quotient mapping $p:G\rightarrow G/H$ is sequential-perfect and the quotient group $G/H$ is metrizable. Thus we only to show that $G/H$ is separable. Since topological group $G/H$ is a continuous homomorphic image of an $\omega$-narrow topological group $G$, by Proposition \ref{P1}, $G/H$ is $\omega$-narrow. $G/H$ is first-countable because $G/H$ is metrizable. Then $G/H$ is a first-countable $\omega$-narrow topological group. Thus $G/H$ has a countable base by Proposition \ref{P2}. This implies that $G/H$ is second-countable. Since every second-countable topological group is separable, we have $G/H$ is separable.
\end{proof}

Now we give an "internal" characterization of strong $q$-space in an $\omega$-balanced topological group.

\begin{theorem}\label{Th9}
 A topological group $G$ is $\omega$-balanced and a strong $q$-space if and only if for each open neighborhood $O$ of the identity in $G$, there is a closed sequentially compact invariant subgroup $H$ of countable character $\{V_{n}:n\in \omega\} $, such that $H \subseteq O$  and $\{V_{n}:n\in\omega\}$ is a strong $q$-sequence at each $ y\in H $, in $G$ such that the canonical quotient mapping $p:G\rightarrow G/H$ is strongly sequential-perfect  and the quotient group $G/H$ is metrizable.

\end{theorem}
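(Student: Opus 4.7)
The plan is to follow the template of Theorems \ref{Th6} and \ref{Th8}, reusing the inductive construction of a countable symmetric family $\gamma$ of open neighborhoods at $e$ and setting $H=\bigcap\gamma$, then verifying the two features specific to the strong $q$-space case. For necessity, Theorem \ref{Th3} immediately gives that $G$ is a strong $q$-space, and $\omega$-balancedness follows by copying the range-metrizable argument of Theorems \ref{Th6} and \ref{Th8}: given any open neighborhood $O$ of $e$ and the prescribed subgroup $H\subseteq O$, the set $V=(G/H)\setminus p(G\setminus O)$ is an open neighborhood of the identity of $G/H$ with $p^{-1}(V)\subseteq O$, so \cite[Theorem 3.4.22]{AT} applies.

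For sufficiency, I would begin from a strong $q$-sequence $\{U_n:n\in\omega\}$ at $e$, arranged to be decreasing with $\bigcap_{n\in\omega}U_n=\bigcap_{n\in\omega}\overline{U_n}$, and inductively build the families $\gamma_n$ exactly as in Theorem \ref{Th6}, using the $\omega$-balanced hypothesis to secure the conjugation condition $xVx^{-1}\subseteq W$ at each stage. The verifications from Theorems \ref{Th6} and \ref{Th8} then transfer verbatim: $H=\bigcap\gamma$ is a closed invariant subgroup of countable character contained in $O$; $H\subseteq C=\bigcap_{n\in\omega}U_n$ is sequentially compact because any strong $q$-sequence is in particular a strict $q$-sequence, whence $C$ is sequentially compact and $H$ is closed inside it; and metrizability of $G/H$ follows by checking first-countability and the neutrality of $H$ and invoking Lemma \ref{L1}.

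The two genuinely new steps concern a decreasing enumeration $\{V_n:n\in\omega\}$ of $\gamma$. First, I would show that $\{V_n\}$ is a strong $q$-sequence at each $y\in H$: for any sequence $y_n\in V_n$, the relations $y^{-1}\in H\subseteq V_{n+1}$ (using symmetry) and $V_{n+1}^{2}\subseteq V_n\subseteq U_n$ (from condition (iii) in the construction together with the nesting with $\{U_n\}$) yield $y^{-1}y_{n+1}\in U_n$, so the strong $q$-property at $e$ delivers a convergent subsequence $y^{-1}y_{n_k+1}\to z$, and hence $y_{n_k+1}\to yz$. Second, I would show that $p$ is strongly sequential-perfect: given a sequentially compact $F\subseteq G/H$ and $\{x_n\}\subseteq p^{-1}(F)$, pass to a subsequence with $p(x_n)\to p(y)$ for some $y\in p^{-1}(F)$; using openness of $p$ and first-countability of $G/H$, a diagonal argument over the neighborhood base $\{p(yV_n)\}$ at $p(y)$ produces elements $h_n\in H$ with $x_nh_n^{-1}\to y$, after which the sequential compactness of $H$ furnishes a further subsequence with $h_{n_k}\to h\in H$, whence $x_{n_k}=(x_{n_k}h_{n_k}^{-1})h_{n_k}\to yh\in p^{-1}(F)$. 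The main obstacle is this last step: carefully producing the lifts $h_n\in H$ so that $x_nh_n^{-1}$ genuinely converges in $G$, rather than merely $p(x_n)$ converging in $G/H$. Once that lift is in place, the sequential compactness of $H$ immediately closes the argument, while the rest of the proof is a direct transliteration of the arguments for Theorems \ref{Th6} and \ref{Th8}.
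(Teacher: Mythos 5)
Most of your outline matches the paper: the necessity direction, the construction of $\gamma$ and $H=\bigcap\gamma$, the verification that $H$ is a closed sequentially compact invariant subgroup of countable character contained in $O$, the metrizability of $G/H$, and the observation that $\{V_n\}$ is a strong $q$-sequence at each $y\in H$ (your translation-by-$y^{-1}$ argument works, though the paper's is shorter: any selection from the $V_n$ is a selection from the $U_n$ since $V_n\subseteq U_n$). The problem is the step you yourself flag as ``the main obstacle'': producing lifts $h_n\in H$ with $x_nh_n^{-1}\to y$ in $G$. This cannot be done the way you describe. Diagonalizing over the base $\{p(yV_n)\}$ of $G/H$ only gives you a subsequence with $x_{n_k}\in yV_k$, which localizes $x_{n_k}$ near the \emph{set} $yH$, not near any particular point of it. To extract $h_k\in H$ with $(yh_k)^{-1}x_{n_k}\to e$ you would need, for \emph{every} neighborhood $W$ of $e$ in $G$, a single sequence $(h_k)$ with $(yh_k)^{-1}x_{n_k}$ eventually in $W$; but the decomposition $x_{n_k}\in yHW$ produces an $h_k$ that depends on $W$, and since $G$ is not assumed first-countable there is no countable family of $W$'s to diagonalize over. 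A telltale sign that the route is wrong: your argument for strong sequential-perfectness uses only the sequential compactness of $H$ and never the hypothesis that $\{V_n\}$ is a strong $q$-sequence at each point of $H$ --- if sequential compactness sufficed here, the strict and strong cases (Theorems \ref{Th8} and \ref{Th9}) would collapse into one another.

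The paper's argument avoids the lift entirely. Having arranged $V_n=p^{-1}(p(V_n))$, it notes that each $xV_n$ meets the sequence $\{x_m\}$ infinitely often (because $p(xV_n)$ is an open neighborhood of $p(x)$ and $p(x_m)\to p(x)$), extracts a subsequence with $x_{n_i}\in xV_i$, and then applies the strong $q$-sequence property of $\{xV_i\}$ \emph{directly} to that selection to get a convergent subsequence; closedness of $p^{-1}(F)$ places the limit in $p^{-1}(F)$. You should replace your lifting step with this selection argument, which is exactly where the ``strong $q$-sequence at each $y\in H$'' hypothesis earns its keep.
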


\begin{proof}	
 Necessity. Suppose that for each open neighborhood $O$ of the identity in $G$, there is a closed sequentially compact invariant subgroup $H$ of countable character $\{V_{n}:n\in \omega\} $, such that $H \subseteq O$ and $\{V_{n}:n\in\omega\}$ is a strong $q$-sequence at each $ y\in H $, in $G$ such that the canonical quotient mapping $p:G\rightarrow G/H$ is strongly sequential-perfect  and the quotient group $G/H$ is metrizable. Then by Theorem \ref{Th3}, $G$ is a strong $q$-space. Put $V=(G/H)\setminus p(G\setminus O)$. Then one can easily show that $V$ is an open neighbourhood of the identity in $G/H$ such that
\begin{flalign*}
\begin{split}
p^{-1}(V)&=p^{-1}(G/H)\setminus p^{-1}(p(G\setminus O))\\
&\subseteq G\setminus (G\setminus O)\\
&=O\\
\end{split}
\end{flalign*}
This implies that $G$ is range-metrizable. Thus from the fact that a topological group is $\omega$-balanced if it is range-metrizable \cite[Theorem 3.4.22]{AT} it follows that $G$ is $\omega$-balanced.

 Sufficiency. (1) Let $e$ be the  identity of $G$. Since topological group $G$ is regular and a strong $q$-space, there is a strong $q$-sequence $\{U_n:n\in\omega\}$ at $e$ such that $\bigcap_{n\in \omega}U_n=\bigcap_{n\in \omega}\overline{U_n}$. Let  $\{U_n:n\in\omega\}$ be a decreasing sequence. Put $C=\bigcap_{n\in \omega}U_n$. Then by Lemma \ref{L2}, $\{U_n:n\in \omega\}$ is a base at $C$ in $G$ and $C$ is sequentially compact in $G$.
 We continue to use the notation of Theorem \ref{Th6}. The following proof is completely similar to Theorem \ref{Th6} and Theorem \ref{Th8}, and we can obtain that the inclusion of $H$ is a closed sequentially compact invariant subgroup of countable character $\gamma=\{V_{n}:n\in\omega\} $ in $G$ such that $H\subseteq O$ already holds. Thus we only need to prove that $\gamma=\{V_{n}:n\in\omega\} $ is a strong $q$-sequence at each $ y\in H $. In fact, $ V_{n}\subseteq U_{n} $ for every $ n\in \omega $ and $ \{U_{n}:n\in \omega\} $ is a strong $q$-sequence at each $ x\in G $. This implies that $ \{V_{n}:n\in \omega\} $ is a strong $q$-sequence at each $ y\in H $.

(2) Next we shall show that $G/H$ is metrizable. Following the proof of Theorem \ref{Th6}, one can easily show that G/H is measurable. Finally, we shall show that the canonical quotient mapping $p:G\rightarrow G/H$ is a strongly sequential-perfect mapping. Following the proof of Theorem \ref{Th6}, one can easily show that $p$ is a closed mapping. Take arbitrary sequentially compact set $F$ in $G/H$. It is only to show that $p^{-1}(F)$ is a sequentially compact set in $ G $. Take a sequence $\{x_n\}_{n\in \omega}$ of $p^{-1}(F)$. Then there is a convergent subsequence of $\{p(x_n)\}_{n\in \omega}$ in $F$ by the sequential compactness of $F$. Without loss of generality, we assume that  $\{p(x_n)\}_{n\in \omega}$ converges to some point $p(x)$ in $F$, where $x\in G$. Let $\{V_n:n\in \omega\}$ be an open neighborhood base at $H$ in $G$ which is a strong $q$-sequence at each $y\in H$. Since $p$ is closed and $H=p^{-1}(p(e))$, we can find an open neighborhood $U_n$ of $p(e)$ such that $p^{-1}(U_n)\subseteq V_n$ for each $n\in \omega$. Clearly, the $\{p^{-1}(U_n):n\in \omega\}$ is also an open neighborhood base at $H$ in $G$ which is a strong $q$-sequence at each $x\in H$. Thus, without loss of generality, we can assume that $V_n=p^{-1}(p(V_n))$ holds for each $n\in \omega$. Therefore, one can easily show that $\{xV_n:n\in \omega\}$ is an open neighborhood base at $xH$ in $G$ which is a strong $q$-sequence at each $z\in xH$. Since $xV_n=xp^{-1}(p(V_n))=xV_nH$ holds for each $n\in \omega$, we have that $xV_n=p^{-1}(p(xV_n))$ holds for each $n\in \omega$. For each $n\in \omega$ we claim that the $xV_n$ contains infinite elements of the sequence $\{x_n\}_{n\in \omega}$. Indeed, if there is an $xV_{n_0}$ such that it contains finite elements of the sequence $\{x_n\}_{n\in \omega}$, then $p(xV_{n_0})$ also contains finite elements of the sequence $\{p(x_n)\}_{n\in \omega}$, because $xV_{n_0}=p^{-1}(p(xV_{n_0}))$. This is a contradiction with $\{p(x_n)\}_{n\in \omega}$ converging to $p(x)$, because $p(xV_{n_0})$ is an open neighborhood of $p(x)$ by $p$ is open. Thus we can find a subsequence $\{x_{n_i}\}_{i\in \omega}$ of $\{x_{n}\}_{n\in \omega}$ such that $x_{n_i}\in xV_i$ for each $i\in \omega$. Since $\{xV_i:i\in \omega\}$ is a strong $q$-sequence at each $z\in xH$ in $G$, $ \{x_{n_i}\}_{i\in \omega}$ has a convergent subsequence. Thus $\{x_n\}_{n\in \omega}$ has a convergent subsequence $\{x_{n_j}\}_{j\in \omega}$. Since $F$ is a sequentially compact set $F$ in the metrizable space $G/H$, $p^{-1}(F)$ is closed in $G$ by continuity of $p$ and compactness of $F$. This implies that the subsequence $\{x_{n_j}\}_{j\in \omega}$ is convergent in $p^{-1}(F)$.
\end{proof}

By Theorem \ref{Th9}, we have the following result.

\begin{corollary}\label{C3}
 A topological group $G$ is $\omega$-narrow and a strong $q$-space if and only if for each open neighborhood $O$ of the identity in $G$, there is a closed sequentially compact invariant subgroup $H$ of countable character $\{V_{n}:n\in \omega\} $,  such that $H \subseteq O$ and $\{V_{n}:n\in\omega\} $ is a strong $q$-sequence at each $ y\in H $, in $ G $ such that the canonical quotient mapping $p:G\rightarrow G/H$ is strongly sequential-perfect and the quotient group $G/H$ is separable and metrizable.
\end{corollary}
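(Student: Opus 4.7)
The plan is to derive Corollary \ref{C3} directly from Theorem \ref{Th9} by following the template already used for Corollaries \ref{C1} and \ref{C2}. The only new content beyond Theorem \ref{Th9} is the upgrade from ``metrizable'' to ``separable and metrizable'' for $G/H$, together with showing that under this extra separability one may replace ``$\omega$-balanced'' by ``$\omega$-narrow''.

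For necessity, I would first invoke Theorem \ref{Th3} to obtain at once that $G$ is a strong $q$-space, leaving only $\omega$-narrowness to be proved. Given any open neighborhood $O$ of the identity, let $H_O$ and $p_O:G\to G/H_O$ be as in the hypothesis; since $G/H_O$ is separable metrizable, it is second-countable. Using the same trick as in the necessity part of Theorem \ref{Th6}, the set $V_O=(G/H_O)\setminus p_O(G\setminus O)$ is an open neighborhood of the identity in $G/H_O$ with $p_O^{-1}(V_O)\subseteq O$. Writing $\mathcal{B}$ for the family of all open neighborhoods of the identity in $G$, the diagonal product of $\{p_O:O\in\mathcal{B}\}$ then embeds $G$ as a topological subgroup of a product of second-countable groups, so Theorem \ref{Th7} yields $\omega$-narrowness.

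For sufficiency, I would use the standard fact that every $\omega$-narrow topological group is $\omega$-balanced, so Theorem \ref{Th9} applies and produces, for each open neighborhood $O$ of the identity, a closed sequentially compact invariant subgroup $H\subseteq O$ of countable character $\{V_n:n\in\omega\}$ which is a strong $q$-sequence at each $y\in H$, with canonical quotient $p:G\to G/H$ strongly sequential-perfect and $G/H$ metrizable. The remaining task is to upgrade metrizability of $G/H$ to separable metrizability. Since $G/H$ is a continuous homomorphic image of the $\omega$-narrow group $G$, Proposition \ref{P1} makes $G/H$ itself $\omega$-narrow; combined with the first-countability coming from metrizability, Proposition \ref{P2} gives $G/H$ a countable base, so $G/H$ is second-countable and therefore separable.

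No step is genuinely difficult. The main obstacle, such as it is, is bookkeeping: the subgroup $H$ produced by Theorem \ref{Th9} must simultaneously carry the countable-character-with-strong-$q$-sequence condition and admit a separable metrizable quotient, and one must check that the diagonal-product construction used for the necessity direction does not accidentally weaken the strong-$q$-sequence clause when transferring the arguments from Corollaries \ref{C1} and \ref{C2}. Once this is noted, the proof is a direct transcription of the argument for Corollary \ref{C2}, with ``sequential-perfect'' replaced by ``strongly sequential-perfect'' and the strong-$q$-sequence clause carried along unchanged.
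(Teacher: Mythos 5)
Your proposal is correct and follows essentially the same route as the paper: Theorem \ref{Th3} for the strong $q$-space property and the diagonal product of the quotient maps $\{p_O\}$ with Theorem \ref{Th7} for $\omega$-narrowness in the necessity direction, and Theorem \ref{Th9} together with Propositions \ref{P1} and \ref{P2} to upgrade $G/H$ from metrizable to separable metrizable in the sufficiency direction. No substantive differences from the paper's argument.
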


\begin{proof}
 Necessity. Suppose that for each open neighborhood $O$ of the identity in $G$, there is a closed sequentially compact invariant subgroup $H$ of countable character $\{V_{n}:n\in \omega\} $ in $G$, such that $H \subseteq O$ and $\{V_{n}:n\in\omega\} $ is a strong $q$-sequence at each $ y\in H $, in $ G $ such that the canonical quotient mapping $p:G\rightarrow G/H$ is strongly sequential-perfect and the quotient group $G/H$ is separable and metrizable.  Then, by Theorem \ref{Th3}, $G$ is a strong $q$-space. Thus we only to show that topological group $G$ is $\omega$-narrow. For each open neighborhood $O$ of the identity in $G$, there is a closed sequentially compact invariant subgroup $H_{O}$, such that $H_{O} \subseteq O$ and every canonical quotient mapping $p_{O}:G\rightarrow G/H_{O}$ is strongly sequential-perfect and the quotient group $G/H_{O}$ is separable and metrizable. Thus every topological group $G/H_{O}$ is second-countable. From the proof process of the necessity of Theorem \ref{Th6}, it can be concluded that for each open neighborhood $O$ of the identity in $G$, there is an open neighbourhood $V_{O}$ of the identity in $G/H_{O}$ such that $ p_{O}^{-1}(V_{O})\subseteq O $. Let $\mathcal{B}$ be the family of all open neighbourhoods of the neutral element in $G$. Then the diagonal product $h$ of the family \{$p_{O}$:$O\in \mathcal{B}$\} is a topological isomorphism of $G$ onto a topological subgroup of the topological product of the family \{$G/H_{O}$:$O\in \mathcal{B}$\}.
 Then by Theorem \ref{Th7}, $G$ is $\omega $-narrow.
	
 Sufficiency: It is well known that every $\omega$-narrow topological group is $\omega$-balanced. Thus topological group $G$ is $\omega$-balanced and a strong $q$-space. Then, by Theorem \ref{Th9}, for each open neighborhood $O$ of the identity in $G$, there is a closed sequentially compact invariant subgroup $H$ of countable character $\{V_{n}:n\in \omega\} $,  such that $H \subseteq O$ and $ \{V_{n}:n\in\omega\} $ is a strong $q$-sequence at each $ y\in H $, in $ G $ such that the canonical quotient mapping $p:G\rightarrow G/H$  is strongly sequential-perfect and the quotient group $G/H$ is metrizable. Thus we only to show that $G/H$ is separable. Since topological group $G/H$ is a continuous homomorphic image of an $\omega$-narrow topological group $G$, by Proposition \ref{P1},  $G/H$ is $\omega$-narrow. $G/H$ is first-countable because $G/H$ is metrizable. Then $G/H$ is a first-countable $\omega$-narrow topological group. Thus $G/H$ has a countable base by Proposition \ref{P2}. This implies that $G/H$ is second-countable. Since every second-countable topological group is separable, we have $G/H$ is separable. 	

\end{proof}


\end{document}